\newtheorem{thm}{Theorem}[section]
\newtheorem{theorem}[thm]{Theorem}
\newtheorem{lemma}[thm]{Lemma}
\newtheorem{proposition}[thm]{Proposition}
\theoremstyle{definition}
\newtheorem{definition}[thm]{Definition}
\newtheorem{remark}[thm]{Remark}
\newcommand{\N} {\mathbf{N}}
\newcommand{\R} {\mathbb{R}}
\newcommand{\Z} {\mathbf{Z}}
\newcommand{\C} {\mathbb{C}}
\newcommand{\bP}{\mathbb{P}}
\newcommand{\cA}{\mathcal{A}}
\newcommand{\cF}{\mathcal{F}}
\newcommand{\cI}{\mathcal {I}}
\newcommand{\cM}{{\mathcal M}}
\newcommand{\cO}{\mathcal{O}}
\newcommand{\cP}{\mathcal{P}}
\newcommand{\cU}{\mathcal{U}}
\newcommand{\cL}{\mathcal{L}}
\newcommand{{\cHol}}{{\cal{{H}}}}
\newcommand{\cV}{\mathcal V}
\newcommand{\al}{\alpha}
\newcommand{\ep}{\epsilon}
\newcommand{\gk}{\frak g}
\newcommand{\hk}{\frak h}
\newcommand{\fg}{\mathfrak g}
\newcommand{\fn}{\mathfrak n}
\newcommand{\fh}{\mathfrak h}
\newcommand{\fb}{\mathfrak b}
\newcommand{\fp}{\mathfrak p}
\newcommand{\sets} {\mathrm{ (sets) }}
\newcommand{\smflds}{ \mathrm{(smflds) }}
\newcommand{\supgrp}{ \mathrm{(sgroups) }}
\newcommand{\GL}{\mathrm{GL}}
\newcommand{\rSL}{\mathrm{SL}}
\newcommand{\rEnd}{{\mathrm{End}}}
\newcommand{\Lie}{\mathrm{Lie}}
\newcommand{\id}{{\mathrm{id}}}
\newcommand{\rVec}{{\mathrm{Vec}}}
\newcommand{\op}{{\mathrm{op}}}
\newcommand{\red}[1]{\widetilde{ #1 }}
\newcommand{\wt}[1]{\widetilde{ #1 }}
\newcommand{\wh}[1]{\widehat{ #1 }}
\newcommand{\restr}[2]{{{#1}}_{{\left. \right|}_{#2}}}
\newcommand{\lra} {\longrightarrow}
\newcommand{\di}{{\rm d}}
\newcommand{\beq}{\begin{equation}}
\newcommand{\eeq}{\end{equation}}
\begin{document}
\title{ Super Bundles} 

\author{Claudio Carmeli}
\address{DIME, 
Universit\`a di Genova, Genova, Italy}
\email{ carmeli@dime.unige.it}

\author{Rita Fioresi}
\address{Dipartimento di Matematica, Universit\`{a} di
Bologna, Piazza di Porta S. Donato, 5. 40126 Bologna, Italy.}
\email{rita.fioresi@unibo.it}

\author{V.S.Varadarajan}
\address{Department of Mathematics, UCLA, Los Angeles, Los Angeles, CA 90095-1555, USA}
\email{vsv@math.ucla.edu}

\begin{abstract}
In this paper we give a brief account of the main aspects of the theory of associated and principal super bundles. As an application, we review the  Borel-Weil-Bott Theorem in the super setting, and some results on projective embeddings of homogeneous spaces.
\end{abstract}

\maketitle

\section{Introduction}

In this paper we want to discuss the basic aspects of the theory 
of associated super vector bundles and principal super bundles over
supermanifolds, together with some applications.
We are interested in both the real differentiable and the 
complex analytic categories, so our ground field is $k=\R$ or $\C$.
In the end, we shall also make some remarks on 
the algebraic category.

\medskip
A Lie supergroup (SLG) is a group object 
in the category of supermanifolds $\smflds$ (real differentiable
or complex analytic).
Morphisms of Lie supergroups are morphisms of the underlying
supermanifolds preserving the group structure. We shall denote
the category of Lie supergroups with $\supgrp$.
We have three different and equivalent ways to view a Lie supergroup
(Refs. \cite{ccf} Ch. 7, \cite{koszul}, \cite{vi2}, \cite{cf}):

\begin{enumerate} 
\item As a supermanifold, that is as
pair $(\red{G}, \cO_G)$, where $\red{G}$ is a Lie group and 
$\cO_G$ a sheaf of superalgebras, with multiplication and inverse
morphisms;

\item As a group valued representable functor $G:\smflds \lra \sets$;

\item As a Super Harish-Chandra pair (SHCP), that is a pair
$(\red{G},\fg)$, where $\red{G}$ is a Lie group and $\fg$ a super
Lie algebra, with $\fg_0 \simeq {\rm Lie}(\red{G})$ together
with some natural compatibility conditions.
\end{enumerate}

The purpose of the present note is to show how to translate this equivalence,
when considering vector bundles or principal bundles on supermanifolds, which
carry a natural SLG action.

\medskip
The material we expose is generally known, however, given the several equivalent
approaches to the theory of supergroups, we think the reader can
benefit by seeing the various approaches to the theory of super vector
and principal bundles together with the equivalences properly spelled
out in detail.
Furthermore, we provide important applications, namely the Borel-Weyl-Bott
theorem and projective embeddings of supermanifolds, which
have an interest on their own.

\section{Super Bundles}

In this section we introduce various types of super bundles
and we prove the equivalence between several definitions. For
more details refer to \cite{bcc, bcf, ccf, vsv2} as well as the more
classical references \cite{dm, Leites, Kostant, ma}.

\subsection{Representations of Supergroups}

We start by defining the concept of linear
action of a SLG on a super vector space.

\begin{definition} \label{action-def}
Let $G$ be a SLG and $V$ a 
finite dimensional super vector space. 
We say that we have an \textit{action} of $G$ on $V$
if we have a natural transformation:
$$
G(\cdot) \times V(\cdot) \lra V(\cdot), \qquad g, v \mapsto g \cdot v
$$
satisfying the usual diagrams together with linearity, that is:
$$
g \cdot (u+v)=g \cdot u + g \cdot v, \qquad g \cdot \lambda u=
\lambda (g \cdot u), \quad g \in G(T), \, u,v \in V(T), \, \lambda \in \cO(T)_0
$$
with $V(\cdot)$ the functor:
$$
V(\cdot):\smflds \lra \sets, \qquad V(T)=(\cO(T) \otimes V)_0
$$
where $T=(\red{T},\cO_T) \in \smflds$, $\cO(T)$ the superalgebra
of global sections.
\end{definition}
We now establish the equivalence of this notion with others.
The following fact is a simple verification (see also
\cite{ccf} Ch. 7, 8, 9).

\begin{proposition}
\label{Proposition::equivrepr}
Let $G$ be a SLG , and $V=V_0\oplus V_1$ a 
finite-dimensional vector superspace. 
The following notions are equivalent.
\begin{enumerate}
\item Action of $G$ on $V$ according to Def. \ref{action-def}
\[
G(\cdot)\times V(\cdot) \rightarrow V(\cdot)
\]
We will refer to this as a 
{\sl $G$ linear action via the functor of points}.

\item A morphism of supermanifolds:
\[
a:\,G\times V \rightarrow V
\]
obeying the usual commutative diagrams and satisfying:
\[
a^\ast(V^\ast)\subseteq \cO(G)\otimes V^\ast
\]
We will refer to this as a {\sl $G$ linear action}.

\item SLG's morphism
\[
G\rightarrow \GL{(V)}
\]
We will refer to this as a {\sl $G$-representation}.


\item A natural transformation
\[
G(\cdot)\rightarrow \GL(V)(\cdot)
\]
We will refer to this as a 
{\sl $G$-representation via the functor of points}.


\item A SHCP representation, that is:
\begin{enumerate}
\item a Lie group morphism
\[
\tilde{\pi}:\, \red{G} \rightarrow \GL(V_0)\times \GL(V_1)
\]
\item a super Lie algebra morphism
\[
\rho^\pi:\,\gk \rightarrow \rEnd(V)
\]
such that
\begin{align*}
\tilde{\pi}(g)\rho^\pi(X) \tilde{\pi}(g)^{-1} =\rho^\pi(Ad(g)X), \qquad
\restr{\rho^\pi}{\fg_0} \simeq \di\tilde{\pi}
\end{align*}
\end{enumerate}

\end{enumerate}
\end{proposition}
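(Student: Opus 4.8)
The plan is to prove the cyclic chain of equivalences $(1)\Leftrightarrow(4)$, $(4)\Leftrightarrow(3)$, $(3)\Leftrightarrow(2)$, and $(3)\Leftrightarrow(5)$, exploiting the standard dictionary between functors of points and morphisms of supermanifolds. For $(1)\Leftrightarrow(4)$: given an action as in Def. \ref{action-def}, for fixed $T$ the map $V(T)\to V(T)$, $v\mapsto g\cdot v$, is $\cO(T)_0$-linear by hypothesis, hence is an element of $\GL(V)(T)$ — here one uses that $\GL(V)$ represents the functor $T\mapsto \rAut_{\cO(T)_0}(V(T))$ of even automorphisms — and the group law diagrams for the action translate precisely into the statement that $g\mapsto(v\mapsto g\cdot v)$ is a natural transformation of group-valued functors. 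Conversely any natural transformation $G(\cdot)\to\GL(V)(\cdot)$ produces an action by evaluating on $v\in V(T)$. The equivalence $(4)\Leftrightarrow(3)$ is Yoneda: since $G$ and $\GL(V)$ are representable, natural transformations between their functors of points correspond bijectively to morphisms of supermanifolds, and this bijection carries group-functor morphisms to SLG morphisms because the group structures are themselves encoded by natural transformations.

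For $(3)\Leftrightarrow(2)$, a SLG morphism $\pi:G\to\GL(V)$ yields the action $a:G\times V\to V$ by composing $\pi\times\id_V$ with the tautological action $\GL(V)\times V\to V$; the commutative diagrams for $a$ follow from those for $\pi$ together with associativity of the tautological action. The condition $a^\ast(V^\ast)\subseteq\cO(G)\otimes V^\ast$ expresses exactly that $a$ is ``linear in the second variable,'' i.e. the pullback of a linear coordinate on $V$ is a linear combination, with coefficients in $\cO(G)$, of linear coordinates on $V$; one checks this holds for actions coming from $\GL(V)$. Conversely, given such an $a$, the linearity condition lets one read off, for each $T$-point $g$, an even $\cO(T)_0$-linear endomorphism of $V(T)$, invertible because of the unit and multiplication diagrams, hence a map $G\to\GL(V)$, which is an SLG morphism again by the diagrams.

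For $(3)\Leftrightarrow(5)$ I would invoke the equivalence of categories between SLGs and SHCPs, applied to the target $\GL(V)$, whose associated SHCP is $\big(\GL(V_0)\times\GL(V_1),\,\fgl(V)\big)$. Under this equivalence an SLG morphism $G\to\GL(V)$ corresponds to a morphism of SHCPs $(\red G,\fg)\to(\GL(V_0)\times\GL(V_1),\fgl(V))$, which is by definition a pair $(\tilde\pi,\rho^\pi)$ consisting of a Lie group morphism on the even parts and a super Lie algebra morphism $\rho^\pi:\fg\to\rEnd(V)$, compatible via the $\Ad$-equivariance relation $\tilde\pi(g)\rho^\pi(X)\tilde\pi(g)^{-1}=\rho^\pi(\Ad(g)X)$ and the condition $\restr{\rho^\pi}{\fg_0}\simeq\di\tilde\pi$; these are precisely the conditions listed in (5). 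Thus (5) is just the SHCP-side transcription of (3).

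The routine content — diagram chases, the verification that the linearity conditions match up, and checking invertibility of the induced endomorphisms — is unobstructed. The one point deserving genuine care, and the main thing to get right, is the passage $(3)\Leftrightarrow(5)$: one must make sure the SHCP attached to $\GL(V)$ really is $\big(\GL(V_0)\times\GL(V_1),\fgl(V)\big)$ with the stated $\Ad$-action, since $\widetilde{\GL(V)}=\GL(V_0)\times\GL(V_1)$ rather than $\GL(V_0\oplus V_1)$, and to confirm that the equivalence of categories for supergroups (as recalled in the introduction, Refs. \cite{ccf}, \cite{koszul}, \cite{vi2}, \cite{cf}) is indeed an equivalence on morphisms, not merely on objects. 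Granting that, the proposition follows formally.
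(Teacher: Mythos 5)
Your proposal is correct and follows essentially the same route as the paper, which declares the result a simple verification, noting (in the subsequent remark) that the equivalence of (1)--(4) is an application of Yoneda's lemma and that only the passage to (5) requires the SLG/SHCP correspondence, for which it cites \cite{ccf} Ch.~7--9. Your write-up merely fleshes out those two ingredients, correctly flagging the identification of the SHCP of $\GL(V)$ with $\bigl(\GL(V_0)\times\GL(V_1),\fgl(V)\bigr)$ as the one point needing care.
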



\begin{remark}
Notice that the first four characterization of the concept of action
are merely an application of Yoneda's lemma; the only check 
concernes the equivalence between any of the first four notion with
the fifth one. 
The above proposition reflects, at the level of representation theory, 
the equivalence existing between SLGs, SHCPs and the functor of points picture.
\end{remark}

Let us now introduce 
the concept of {\sl contragredient
representation}.

\begin{definition}\label{contra-rep}
Let $\pi:G(\cdot) \lra \GL(V)(\cdot)$ be a $G$-representation. 
As in the classical setting, we have that $\pi$ induces another
representation on $V^*$ that we call the
\textit{contragredient representation}. Such a representation
is given by:
$$
\pi_c(g)(f)(v)=f(\pi(g^{-1})v), \qquad f \in V^*
$$
Equivalently if
$\pi=(\tilde{\pi},\rho^\pi)$ is a SHCP's representation of $(\red{G},\gk)$ 
on $V$, 
the \textit{contragredient representation} $(\tilde{\pi}_c,\rho^\pi_c)$
with respect to $(\tilde{\pi},\rho^\pi)$ is defined as:
\begin{align*}
\tilde{\pi}_c(g)(f)(v)  \coloneqq f(\tilde{\pi}(g^{-1})v),\qquad
\rho^\pi_c(X)(f)(v)  \coloneqq f(\rho^\pi(-X)v)
\end{align*}
with \(f\in V^\ast\,,\, v\in V\,,\, g\in \red{G}\,,\, X\in \fg\).
Given an action $a$ of $G$ in $V$, we 
shall denote the corresponding contragredient action with $a_c$. 
\end{definition}

\subsection{Super Vector Bundles and associated bundles}
We now want to define the concept of super vector bundle on $G/H$
associated to a finite dimensional
$H$-representation, where $H$ a closed subSLG of $G$.
Classically if $\red{\sigma}$ is a representation in $V$ of the ordinary
Lie group $\red{H}$ a closed subgroup of $\red{G}$, the global sections of the
associated bundle consist of the $\red{H}$-covariant functions, that is
the functions $f:G \lra V$ satisfying:
\begin{align}
\label{eq::clcov}
f(gh)=\red{\sigma}(h)^{-1}f(g)
\end{align}

We now want to give this same concept in supergeometry
in the three different settings, SLG's, SLG's
through the functor of points and SHCP's in the same spirit as in
Prop. \ref{Proposition::equivrepr}. Preliminary to this, let us recall
the concept of \textit{super vector bundle}
(see, for example, \cite{dm, ccf}). In the following, we let \(k=\R\,,\,\C\).

\begin{definition}
Let $M=(\red{M}, \cO_M)$ be a supermanifold. 
A \textit{super vector bundle} $\cV$ of rank $p|q$
is a locally free sheaf of rank $p|q$ 
that is for each $x \in \red{M}$ there exist $U$ open 
such that $\cV(U) \cong \cO_M(U)^{p|q}:= \cO_M(U) \otimes k^{p|q}$.
$\cV$ is a sheaf
of $\cO_M$ modules and at each $x \in \red{M}$, the stalk $\cV_x$ is
a $\cO_{M,x}$ module.
We define the \textit{fiber} of $\cV$ at
the point $x$ as the vector superspace $\cV_x/m_{x}\cV_x$, where
$m_x$ is the maximal ideal of $\cO_{M,x}$.  

\medskip

More explicitly,
if $\cV(U) \cong \cO_M(U)^{p|q}$, we have that the
stalk at $x$ is $\cV_x=\cO_{M,x}^{p|q}$, while the fiber is  $k^{p|q}$.
\end{definition}

\begin{definition}
Let $G$ be a SLG, $H$ a closed subSLG, $\sigma$ a 
finite-dimensional representation of $H$ on $V$, with
$\sigma=(\widetilde{\sigma}, \rho^\sigma)$ in the language of SHCP's. 
Consider the sheaf over $\red{G}/\red{H}$
\[
\cA(U) \coloneqq \cO_G (\red{p}^{-1}(U)) \otimes V
\]
where $p:G\rightarrow G/H$ is the canonical submersion.

\begin{itemize}
\item We define 
in the SLG context the assignment:
\begin{align}
\label{eq::assbundleSLG}
U\mapsto \cA_{SLG}(U)
\end{align}
where:
\begin{align} 
\label{eq::sheafSLG}
\cA_{SLG}(U) & \coloneqq\set{f\in \cA(U)\, |\, 
( \mu^\ast_{G,H}\otimes 1) (f) = (1\otimes a^\ast_c) f}
\end{align}
and
\[
\mu_{G,H}:\, G\times H \, \stackrel{\begin{array}{c}1\times i\\\end{array}}
{\hookrightarrow} \, G\times G \, \stackrel{
\begin{array}{c}\mu\\ \end{array}}{\rightarrow} \, G
\]
$a_c:\, H\times V^\ast \rightarrow V^\ast$  
denotes the action associated to the contragredient 
representation of $H$ in $V^\ast$ with respect to $\sigma$. 

\item  We define 
in the SHCP context the assignment:
\begin{align}
\label{eq::assbundleSHCP}
U\mapsto \cA_{SHCP}(U)
\end{align}
where:
\begin{align}
\label{eq::sheafSHCP}
\cA_{SHCP}(U) & := \left\{f\in \cA(U)\, |\, 
 \left\{\begin{array}{ll}
( r_h^\ast\otimes 1) f =(1\otimes {\widetilde{\sigma}(h)^{-1})}(f) & 
\forall h\in \red{H} \\
(D^L_X \otimes 1) f = (1\otimes {\rho^\sigma}{(-X})) f & \forall X\in \hk_1
 \end{array}
 \right.
 \right\}
 \end{align}

\item  We define 
in the functor of points context the assignment:
\begin{align}
\label{eq::assbundleFOP}
U\mapsto \cA_{FOP}(U)
\end{align}
where
\begin{align}
\label{eq::sheafFOP}
\cA_{FOP}(U)  \coloneqq \left\{
f: p^{-1}(U) \rightarrow V \otimes_{k}{k}^{1 \mid  1}  \, 
|\right.  \left. \, 
f_T(gh)=\sigma^\prime_T(h)^{-1} f_T(g)
\right\}, 
\end{align}
with $g \in G(T)$, $h \in H(T)$ and
\[
\sigma^\prime : H\rightarrow \GL(V \otimes_{k}k^{1|1}), \qquad
\sigma^\prime=\sigma\otimes 1
\]
where  $p^{-1}(U) \subset G$ is the open subsupermanifold
corresponding to the open set  $\red{p}^{-1}(U)$
and  $T \in \smflds$.\\
Notice that also $\cA_{FOP}(U) \subset \cA(U)$ 
since elements of the  sheaf $\cO_G(\red{p}^{-1}(U))$ identify with morphisms
of supermanifolds $p^{-1}(U)$ $\rightarrow$ 
$k^{1|1}$.
\end{itemize}
\end{definition}

\medskip

We now 
establish the equivalence of
the three notions introduced in the previous definition.

\begin{proposition}
\label{Proposition::equivbundle}
The assignments 
\begin{align}
\label{eq::assbundle}
U\mapsto \cA_{SLG}(U), \quad U\mapsto \cA_{SHCP}(U), \qquad U \mapsto  
 \cA_{FOP}(U)
\end{align}
define super vector bundles on $G/H$ with fiber isomorphic to $V$. 
Moreover we have
\begin{align}
\label{eq::firsteq}
\cA_{FOP} = \cA_{SHCP}= \cA_{SLG} 
\end{align}
\end{proposition}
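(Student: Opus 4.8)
The plan is to prove the chain of equalities \eqref{eq::firsteq} first, and then observe that the super vector bundle properties follow from the classical case applied to $\cA_{SLG}$ (or equivalently to any one of the three). The core of the argument is an exercise in translating between the three languages for supergroups that was already made explicit at the level of representations in Proposition \ref{Proposition::equivrepr}; here we just port that dictionary to the covariance conditions defining the three subsheaves of $\cA(U) = \cO_G(\red p^{-1}(U)) \otimes V$.

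First I would establish $\cA_{SHCP}(U) = \cA_{SLG}(U)$. An element $f \in \cA(U)$ lies in $\cA_{SLG}(U)$ iff $(\mu_{G,H}^\ast \otimes 1)(f) = (1 \otimes a_c^\ast)(f)$ as elements of $\cO_{G\times H}(\red p^{-1}(U) \times \red H) \otimes V$. The key point is that a morphism out of the supermanifold $G \times H$ is, by the SHCP description of $H$, determined by its pullback along the two maps $G \times \red H \hookrightarrow G \times H$ (the reduced part) together with the action of the left-invariant vector fields on the $H$-factor coming from $\hk_1$; this is exactly the statement that $\cO_{G\times H}$ is recovered from $\cO_G \,\wh\otimes\, \cO_H$ and that $\cO_H$ as a sheaf is built from $\cO_{\red H}$ and $\Lambda \hk_1^\ast$. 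Evaluating the identity $(\mu_{G,H}^\ast \otimes 1)(f) = (1 \otimes a_c^\ast)(f)$ on the reduced points $h \in \red H$ gives the first condition in \eqref{eq::sheafSHCP}, namely $(r_h^\ast \otimes 1)f = (1 \otimes \wt\sigma(h)^{-1})f$, since $a_c^\ast$ restricted to $\red H$ is just $\wt\sigma(h)^{-1} = \wt{\sigma_c}(h)$ acting on $V$ (note $V^{\ast\ast} = V$, so the contragredient of the contragredient on $V^\ast$ brings us back to the $\wt\sigma(h)^{-1}$ action on $V$-valued functions). Differentiating the same identity along $X \in \hk_1$ and using $\restr{\rho^\sigma}{\fg_0} \simeq \di\wt\sigma$ together with the definition $\rho^\sigma_c(X)(f)(v) = f(\rho^\sigma(-X)v)$ gives the second condition $(D_X^L \otimes 1)f = (1 \otimes \rho^\sigma(-X))f$. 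Conversely, since these two families of conditions pin down the pullback of $f$ under every $T$-point of $G \times H$, a function satisfying \eqref{eq::sheafSHCP} satisfies the SLG covariance identity; this is precisely the content of Proposition \ref{Proposition::equivrepr} transported from $V$ to $\cA(U)$.

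Next I would prove $\cA_{SLG}(U) = \cA_{FOP}(U)$. This is the Yoneda step. A section $f \in \cA(U) = \cO_G(\red p^{-1}(U)) \otimes V$ corresponds, by the identification noted at the end of the definition, to a natural transformation $f_\bullet : (p^{-1}(U))(\cdot) \to (V \otimes k^{1|1})(\cdot)$. The SLG identity $(\mu_{G,H}^\ast \otimes 1)(f) = (1 \otimes a_c^\ast)(f)$ is, under Yoneda, exactly the assertion that for every $T$ and every $(g,h) \in G(T) \times H(T)$ one has $f_T(gh) = (a_c)_T(h, -)$ applied to $f_T(g)$; unwinding the contragredient $a_c$ on $V^\ast$ — again using $(V^\ast)^\ast = V$ — turns $(a_c)_T(h,-)$ into $\sigma'_T(h)^{-1}$ acting on $V \otimes k^{1|1}$, where $\sigma' = \sigma \otimes 1$ accounts for the $\cO(T)$-linear extension absorbed into the $k^{1|1}$ factor. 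Hence the two defining conditions coincide verbatim, and $\cA_{SLG}(U) = \cA_{FOP}(U)$. Checking naturality in $U$ (so that the three assignments are genuinely sheaves, not just presheaves agreeing section-by-section) is routine: all three conditions are restriction-compatible because $\mu_{G,H}^\ast$, $a_c^\ast$, $r_h^\ast$ and $D_X^L$ all commute with restriction to opens of $\red G/\red H$ pulled back to $\red G$.

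Finally, the super vector bundle claim: it suffices to verify it for $\cA_{SLG}$. Locally on $\red G/\red H$ choose a section of the submersion $\red p$, which after passing to a possibly smaller open $U$ lifts to a morphism $s : p^{-1}(U) \to H$ (local triviality of the principal $H$-bundle $G \to G/H$ in the super category — here one uses that $H$ is a closed subSLG and the local splitting of $\cO_G$ over $\red p^{-1}(U)$). Then $f \mapsto \restr{f}{\text{section}}$, more precisely $f \mapsto (s\text{-translate of }f)$, identifies $\cA_{SLG}(U)$ with $\cO_{G/H}(U) \otimes V$, exhibiting local freeness of rank $p|q = \dim V$; and the fiber at $x$ is then $V$ as required. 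I expect the main obstacle to be purely bookkeeping: getting the two contragredient operations (one in the definition of $a_c$, one hidden in passing from $V^\ast$-covariance back to $V$-valued covariant functions) to cancel correctly, and being careful that the SHCP conditions in \eqref{eq::sheafSHCP}, which only use $\hk_1$ and $\red H$, really do determine the full pullback along $G \times H$ — this last point is where one genuinely invokes the structure theory of SHCPs rather than soft nonsense. None of this is deep once the dictionary of Proposition \ref{Proposition::equivrepr} is in hand; the proposition is essentially a corollary of that one.
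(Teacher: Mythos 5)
Your proposal is correct and follows essentially the same route as the paper: the $\cA_{SLG}=\cA_{FOP}$ identification via Yoneda, the $\cA_{SLG}=\cA_{SHCP}$ identification by restricting to reduced points and differentiating along $\hk_1$ (a step the paper simply cites from Balduzzi--Carmeli--Cassinelli rather than sketching), and local triviality via a local section of the submersion $p\colon G\to G/H$, which is exactly the paper's pair of mutually inverse maps $\eta$ and $\zeta$ built from $s^\ast$ and $(\gamma^\ast\otimes 1_V)(1_U\otimes a_c^\ast)$. The only cosmetic differences are the order of the steps and that you verify the bundle structure on $\cA_{SLG}$ where the paper works with $\cA_{SHCP}$; given the established equalities this is immaterial.
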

\begin{proof}
We first show that $\cA_{SHCP}$ is a super vector bundle on 
the quotient $G/H$.
Let us denote $\cA_{SHCP}$ with $\cF$. We need to show that 
$\cF$ is a sheaf of $\cO_{G/H}$--modules, and that
it is locally free.  $\cO_{G/H}$ acts naturally on the first
component of $\cA$, we now want to show that such an action is
well defined on $\cF$, so that $\cF(U)$ is an $\cO_{G/H}(U)$--module
for all open $U$. Indeed, if $\phi\in\cO_{G/H}(U)$ and $f\in\cF(U)$ 
then $(r^\ast_h \otimes 1)(\phi f) = (1 \otimes \sigma(h))^{-1}(\phi f)$, 
and $(D^L_X \otimes 1)(\phi f)=(1 \otimes \rho(-X))(\phi f)$ 
(due to the right $H$ invariance of $\phi$). 
\\
Moreover it is clear that $\cF$ is a sheaf since, for each open $U\subseteq G/H$, $\cF(U)$ is a sub$\cO_{G/H}(U)$--module of $\cO_G(U)\otimes V$. Using the fact $U \mapsto \cO_G(U)\otimes V$ is a sheaf over $G/H$ and the fact that right $H$--invariance is a local property, it follows that $\cF$ is a sheaf over $G/H$.\\
In order to prove the local triviality of the sheaf $\cF$, we will use the existence of local sections for  $p\colon G\to G/H$. In Ch. 8 in \cite{ccf}
we have the local isomorphism:
\[
\gamma\colon W\times H\to p^{-1}(W)
\]
so that we can define a section:
\begin{align*}
s\colon W & \to p^{-1}(W)
\end{align*}
such that $s^\ast(f)= (1\otimes i_e^*)\gamma^\ast(f)$. 
Notice that $s$ can also be described as 
$\gamma \circ (1\times i_{\set{e}})$ where 
$i_{\set{e}}\colon \set{e}\to H$ is the embedding of the 
topological point $e$ into $H$. \\
Suppose hence that a neighborhood $U$ of $1$ admitting a 
local section $s$ has been fixed. Define the following two maps
\begin{align*}
\eta \colon \cF(U) & \to  \cO{(U)}_{G/H}\otimes V\\
F & \mapsto f_F\coloneqq (s^{\ast}\otimes 1_V)(F)
\end{align*}

and
\begin{align*}
\zeta \colon \cO{(U)}_{G/H}\otimes V & \to  \cF(U)\\
f & \mapsto F_f \coloneqq (\gamma^{\ast}\otimes 1_V)(1_U\otimes a^{\ast}_c)f
\end{align*}

It is easy to check that $\eta$ and $\zeta$ are one the inverse of the other.

\medskip

We now go to the equalities: $A_{SHCP}=A_{SLG}=A_{FOP}$.
The equality $ A_{SHCP}=A_{SLG} $ is proved in \cite{bcc}.
In order to  prove $A_{FOP} =A_{SLG}$, it is enough to 
notice that condition \eqref{eq::sheafFOP} is equivalent to the 
commutativity of the following diagram
\[
\xymatrix{
G\times H \ar^{\mu_{G,H}}[r]\ar@{=}[d] & G \ar^f[r] & V\otimes k^{1|1}\ar@{=}[d]\\
G\times H \ar^{c({f}\times 1_H)}[r] & \qquad H\times (V\otimes k^{1|1}) \qquad
\ar^{{\sigma^\prime}^{-1}}[r] & V\otimes k^{1|1}\\
}
\]
where \(c\colon V\times H \to H\times V\) is the commutation morphism.
\end{proof}

\subsection{Principal Super Bundles}

If 
$E$ and $M$ are smooth manifolds and $G$ is a Lie group, we say 
that $\pi: E \lra M$ is a \textit{$G$-principal bundle}  with total space
$E$ and base $M$, if $G$ acts freely from the right on $E$, trivially on $M$ 
and it is locally trivial, i.e. there exists an open cover $\{U_i\}_{i\in I}$ of 
$M$ and diffeomorphisms 
$$
\sigma_i:\pi^{-1}(U_i) \lra U_i \times G, \qquad
\sigma_i(u)=(\pi(u),h)
$$
such that
$$
\sigma_i(ug)=(\pi(u),hg), \quad g, h \in G. 
$$
$M$ can thus be identified with the orbit space $E/G$.

We want to give the super analogue of this definition in the
different languages we employed in the previous section.

\medskip
Let $E=(\red{E}, \cO_E)$ and $M=(\red{M},\cO_M)$ be supermanifolds 
and $G$ a SLG acting on $E$ from the right.
Assume we have a surjective submersion
$\pi:E \lra M$.
Assume we have an open cover $\{\red{U_i}\}$ of $\red{M}$ and
diffeomorphisms
$\sigma_i:\pi^{-1}(U_i) \lra U_i \times G$
making the following diagram commute:
\beq \label{comm-diagr}
\xymatrix{
\pi^{-1}(U_i) \ar[rd]^\pi \ar[r]^{\sigma_i} &  U_i \times G 
\ar[d]^{\mathrm{pr}_1} \\
& U_i
}
\eeq
where
now $U_i=(\red{U_i}, \cO_M|_{\red{U_i}})$ and 
$\pi^{-1}(U_i)=(\red{\pi}^{-1}(\red{U_i}),\cO_E|_{\red{\pi}^{-1}(\red{U_i})})$ 
are supermanifolds.

\begin{proposition} \label{eq-cond}
Let the notation be as above. 
Let $a:E \times G \lra E$ be the right action of $G$ on $E$.
The following three conditions are equivalent:
\begin{enumerate}
\item (Sheaf theoretic approach)
\beq \label{dual-eq}
a^* \cdot \sigma_i^*=(\sigma_i^* \otimes 1)(1 \otimes \mu^*)
\eeq
where $\mu$ is the multiplication in $G$.
\item (SHCP approach)
\beq \label{shcp-eq}
i) \, \wt{\sigma_i} \cdot \wt{a}=(1\times \wt{\mu})(\wt{\sigma_i} \times 1), \qquad ii) \,
\rho_a \circ \sigma_i^*=(\sigma_i^* \otimes 1)(1 \otimes \rho_\mu)
\eeq
where:
\begin{itemize}
\item $\red{a}:E \times \red{G} \lra \red{G}$ is the action
of the ordinary Lie group $\red{G}$ on the supermanifold $E$ (similar
meaning for $\wt{\sigma_i}$ and $\wt{\mu_i}$).
\item $\rho_a:\fg \lra \rVec(E)^\op$, $\rho_a(X)=(1 \times X_e)a^*$,
$\fg=\Lie(G)$
\item $\rho_\mu: \fg \lra \rVec(G)^\op$, $\rho_\mu(X)=(1 \times X_e)\mu^*$
\end{itemize}
(see \cite{ccf} Ch. 8 for more details on the SHCP language).
\item (Functor of points approach):
\beq \label{fopts}
(\sigma_i)_T(ug)=(\pi_T(u),hg), \qquad g,h \in G(T)
\eeq
where $T \in \smflds$.
\end{enumerate}
\end{proposition}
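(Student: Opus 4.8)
The plan is to prove the three conditions equivalent by showing each one is essentially a ``translation'' of the single underlying geometric fact that the trivialization $\sigma_i$ intertwines the right $G$-action on $\pi^{-1}(U_i)$ with the right $G$-action on $U_i\times G$ given by multiplication on the second factor, in exactly the same way that Propositions \ref{Proposition::equivrepr} and \ref{Proposition::equivbundle} are proved. Concretely, I would first observe that \eqref{dual-eq} is nothing but the pullback-algebra form of the commutative diagram
\[
\xymatrix{
\pi^{-1}(U_i)\times G \ar[r]^{a} \ar[d]_{\sigma_i\times 1_G} & \pi^{-1}(U_i) \ar[d]^{\sigma_i}\\
(U_i\times G)\times G \ar[r]^{\ \ (1_{U_i}\times\mu)\circ\alpha} & U_i\times G
}
\]
where $\alpha$ is the obvious rebracketing $(U_i\times G)\times G\cong U_i\times(G\times G)$; taking $\cO$-sections and using $(fg)^*=g^*f^*$ turns commutativity into $a^*\sigma_i^*=(\sigma_i^*\otimes 1)(1\otimes\mu^*)$. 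The equivalence $(1)\Leftrightarrow(3)$ is then immediate from Yoneda: a morphism of supermanifolds is determined by its functor of points, so the diagram above commutes iff it commutes after applying $\cdot(T)$ for every $T\in\smflds$, and $(\sigma_i)_T(ug)=((\sigma_i)_T(u))\cdot g$ is precisely the statement that the $T$-points version commutes; writing $(\sigma_i)_T(u)=(\pi_T(u),h)$ (legitimate because of the commuting triangle \eqref{comm-diagr}) gives \eqref{fopts}.

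Next I would turn to $(1)\Leftrightarrow(2)$, which is the genuine content of the proposition and where the SHCP formalism enters. By the equivalence of categories between Lie supergroups and SHCPs (and the analogous description of supermanifold morphisms recalled in \cite{ccf} Ch.~8), a morphism between supermanifolds carrying $G$-actions is equivalent to the pair consisting of the induced morphism of reduced manifolds together with its action on the structure sheaf, and the action $a^*$ of $G=(\red G,\fg)$ on $\cO_E$ is completely encoded by the pair $(\red a, \rho_a)$, where $\red a$ is the restriction to reduced spaces and $\rho_a(X)=(1\times X_e)a^*$ records the infinitesimal odd directions, $X\in\fg_1$ sufficing by Harish-Chandra compatibility. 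Splitting the identity \eqref{dual-eq} into its ``group part'' and its ``Lie algebra part'' under this decomposition yields exactly \eqref{shcp-eq}(i) (obtained by passing to reduced spaces, where $\mu_i^*$ becomes $\wt\mu$) and \eqref{shcp-eq}(ii) (obtained by applying $1\times X_e$ and using that the even part is already accounted for by (i)). Conversely, given (i) and (ii), the SHCP reconstruction theorem assembles them back into a unique morphism-level identity, namely \eqref{dual-eq}. I would spell out the bookkeeping: $(1\times X_e)\circ(a^*\sigma_i^*)=\rho_a(X)\circ\sigma_i^*$ on one side and $(1\times X_e)\circ(\sigma_i^*\otimes 1)(1\otimes\mu^*)=(\sigma_i^*\otimes 1)(1\otimes\rho_\mu(X))$ on the other, using the Leibniz-type rule for how $X_e$ interacts with the tensor factors.

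The main obstacle is not any single hard computation but rather the careful matching of the two ``halves'' in the $(1)\Leftrightarrow(2)$ step: one must check that the even generators of $\cO_E$ (equivalently, the reduced-space information plus the $\fg_0$-action) are governed precisely by \eqref{shcp-eq}(i) while the odd generators are governed by \eqref{shcp-eq}(ii), and that these together exhaust \eqref{dual-eq}. This is where one uses that $\rho_a$ restricted to $\fg_0$ is determined by $\red a$ (the compatibility condition in the definition of an SHCP representation, cf. Prop.~\ref{Proposition::equivrepr}(5)), so that no independent condition on $\fg_0$ is needed. All the remaining verifications — that $\alpha$-rebracketing is associative-compatible, that $(\sigma_i\times 1_G)$ is an isomorphism on the relevant opens, that $(fg)^*=g^*f^*$ — are routine and I would simply cite \cite{ccf} Ch.~7--8 rather than reproduce them. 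Once the dictionary is set up, each of the three conditions is visibly the image of the others under the relevant equivalence, completing the proof.
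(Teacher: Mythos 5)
Your proposal is correct and follows essentially the same route as the paper: both arguments reduce everything to the single commutative square expressing that $\sigma_i$ intertwines the action with right multiplication, obtain $(1)\Leftrightarrow(3)$ by passing to $T$-points via Yoneda and the Global Chart theorem, and obtain $(1)\Leftrightarrow(2)$ by reading the same square through the SHCP dictionary, with (i) coming from the reduced level and (ii) from the definitions of $\rho_a$ and $\rho_\mu$. The extra care you take in noting that the $\fg_0$-part imposes no independent condition (being determined by $\widetilde{a}$) is a detail the paper leaves implicit but is consistent with its citation of the SHCP results in \cite{ccf} Ch.~8.
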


\begin{proof} We first show that (1) is equivalent to (3).
Let us choose, without loss of generality, a covering $\{U_i\}_{i \in I}$ by
superdomains (see \cite{ccf} Sec. 3.2). By the Global Chart theorem
(see \cite{ccf} Thm. 4.2.5), we have that we can express in the
functor of points notation the
 diagram (\ref{comm-diagr}) as:
$$
(\sigma_i)_{T}(u)=(\pi_T(u), h), \qquad u \in \pi^{-1}(\red{U_i})(T),\,
h \in G(T)
$$
for $T \in \smflds$. So the condition (3) of our proposition makes
sense as it is written, recalling that $ug \in \pi^{-1}(U_i)(T)$
and $gh \in G(T)$ are defined as:
$$
ug=m \cdot u \otimes g \cdot a^*, \qquad gh=m \cdot g \otimes h \cdot \mu^*
$$
$m$ being the multiplication in $\cO(T)$ (see \cite{ccf} Ch. 10). 
Because of the equivalence between the functor of points morphisms
and morphisms of supermanifolds, we can write the following diagram:
\beq \label{comm-diagr2}
\xymatrix{
\pi^{-1}(U_i) \times G \ar[r]^{\sigma_i \times 1} \ar[d]_a & 
U_i \times G \times G \ar[d]^{1 \times \mu} \\
  \pi^{-1}(U_i)       \ar[r]_{\sigma_i}       & U_i \times G }
\eeq
which on the sheaves proves immediately the equivalence 
between (1) and (3). We now show
that (1) and (2) are equivalent.
By Prop. 8.3.2 and 8.3.3 the action in the
language of SHCP grants the existence of $\wt{a}$ and $\rho_a$.
The diagram (\ref{comm-diagr2}) expressed in the language of
SHCP's gives the equivalence between the conditions (1) and (2).
Condition (i) is immediate from diagram (\ref{comm-diagr2}), while (ii)
comes directly from the definitions of $\rho_a$ and $\rho_\mu$.

\end{proof}

\begin{definition}
We say that a SLG $G$ acts \textit{freely} on the right on a supermanifold $E$
if we have an action $a:E \times G \lra E$ and
the group $G(T)$ acts freely on the right on the set $E(T)$ for
all supermanifolds $T$, via the natural transformation
$a_T:E(T) \times G(T) \lra E(T)$.
\end{definition}

We are ready to give the definition of {\sl principal super bundle}.

\begin{definition}
Let $E$ and $M$ be supermanifolds and $G$ a SLG.
We say that a surjective submersion
$\pi:E \lra M$ is a 
\textit{principal super bundle}  with total space
$E$ and base $M$, if $G$ acts freely from the right on $E$, trivially on $M$,
and we have an open cover $\{\red{U_i}\}$ of $\red{M}$ and
isomorphisms
$\sigma_i:\pi^{-1}(U_i) \lra U_i \times G$
making the diagram (\ref{comm-diagr}) commute and such that
the three equivalent conditions of Prop. \ref{eq-cond} are
satisfied.
\end{definition}

\section{Applications}

In this section we examine some important applications of the theory
of associated and principal super bundles described above.

\medskip
Let $\fg$ be a complex contragredient Lie superalgebra, namely $\fg$
is one of:
$$
A(m,n), \, m \neq n, \quad B(m,n), \quad C(n), \quad D(m,n) \quad
D(2,1;\al), \, F(4), \, G(3)
$$
Let $\fh$ be a Cartan subalgebra (recall $\fh \subset \fg_0$).
Let $G$ be a complex simply connected analytic supergroup with $\fg=\Lie(G)$;
we call such a $G$ \textit{simple}.
Let $B$ a Borel subsupergroup of $G$,
namely the subsupergroup associated with 
a fixed Borel subalgebra of $\fg$ (i.e. we fix a positive system)
and let $T$ be the torus associated
with $\fh$. Let $P$ be a subsupergroup containing $B$.
We call such supergroups \textit{parabolic subsupergroups}. 
Let $\chi:P \lra \C^\times$ be a character of $P$, $\fp=\Lie(P)$. 
Hence by Prop. \ref{Proposition::equivbundle} we can
define a line bundle on $G/P$ and its sheaf is:
\beq\label{lineb}
\begin{array}{rl}
\cL^\chi(U)=&\left\{f:  p^{-1}(U)\rightarrow 
{\C}^{1 \mid  1} \, 
|\right.   \, \\  \\
&f_T(gb)=\chi_T(b)^{-1} f_T(g), \, g \in G(T), b \in P(T)
\left. 
\right\}=\\ \\
=&\left\{\right.
f\in \cO_G(p^{-1}(U))\, |\,  
 r_h^\ast f =\chi_0(h)^{-1} (f),\, 
\, \forall h\in \fp_0 , \\ \\
&  D^L_X  f = \lambda{(-X}) f, \,\,  \forall X\in \fp_1
 \left.\right\} 
\end{array}
\eeq
in the language of functor of points and SHCP respectively ($\lambda=d\chi_e$).

\subsection{The Borel-Weil-Bott Theorem}

We want to realize the irreducible finite dimensional holomorphic
representations of 
$G$ in the vector superspace of holomorphic sections of a certain
super line bundle on $G/B$ and prove the
super version of the Borel-Weil-Bott theorem, which
was first established in \cite{penkov}, with a different
approach. Our treatment is similar
to the one in 
\cite{cfv}, where, however, the main accent is on infinite 
dimensional representations
of the real supergroup underlying $G$. 

Let $\fg = \fn^- \oplus \fh \oplus\fn^+$, where $\fn^\pm$ are
nilpotent subalgebras, $\fb:=\fh \oplus \fn^+=\Lie(B)$ the corresponding
borel subsuperalgebra
and $\fb^-:=\fh\oplus \fn^-$ is the borel subsuperalgebra opposite to $\fb$.
Let $N^\pm$, $T$ be the subSLG in $G$ corresponding to $\fn^\pm$,
$\fh$ respectively. Fix $\chi:T \lra \C^\times$ a character of the torus $T$
and extend it trivially to the whole $B$. Let $\lambda \in \fh^*$,
$\chi=exp(\lambda)$ (here the exponential offers no difficulties
since $T$ is even). Since the character $\chi$ is determined by $\lambda$, we
shall denote the line bundle $\cL^\chi$ also by $\cL^\lambda$ and the character
$\chi$ by $\chi_\lambda$.

\medskip
The topological space $\wt{N^-}\wt{T}\wt{N^+}$ is open in 
$\wt{G}$ and defines an open subsupermanifold of $G$ called the
\textit{big cell}, that we denote with $\Gamma$. It is not
difficult to see that the morphism, given in the functor of points
notation as:
$$
N^- \times T \times N^+ \lra G, \qquad (n^-,h,n^+) \mapsto n^-hn^+
$$
is an analytic diffeomorphism onto $\Gamma$.
Similarly we have an analytic diffeomorphism of $N^- \times B$
onto $\Gamma$. Hence we can identify quite naturally
the quotient $\Gamma/B$ with the subsupergroup $N^-$. 
We now state a lemma, which is an easy consequence of
what we have detailed above.

\begin{lemma}
Let the notation be as above. 
Then we have an isomorphism
identifying the sections of $\cL^\lambda$ on $\Gamma/B$ with the global
holomorphic sections on $N^-$:
\beq\label{id-tilde}
\cL^\lambda(\Gamma/B) \cong \cO(N^-)
\eeq
\end{lemma}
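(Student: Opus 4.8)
The plan is to unwind the definition of $\cL^\lambda$ given in \eqref{lineb} over the open set $U = \Gamma/B \subset G/B$, and to use the diffeomorphism $N^- \times B \xrightarrow{\sim} \Gamma$ described just above the statement. First I would observe that $p^{-1}(\Gamma/B)$ is precisely the big cell $\Gamma$, so that by \eqref{lineb} an element of $\cL^\lambda(\Gamma/B)$ is a morphism $f : \Gamma \to \C^{1|1}$ (equivalently $f \in \cO_G(\Gamma)$) satisfying the $B$-covariance condition $f_T(gb) = \chi_T(b)^{-1} f_T(g)$ for all $g \in G(T)$, $b \in B(T)$, $T \in \smflds$. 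The key point is that the diffeomorphism $\psi : N^- \times B \to \Gamma$, $(n^-, b) \mapsto n^- b$, is $B$-equivariant for the right $B$-action on the second factor of $N^- \times B$ and the right $B$-action on $\Gamma$ inherited from $G$.

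Next I would define the candidate isomorphism. Given $f \in \cL^\lambda(\Gamma/B)$, restrict it along the inclusion $j : N^- = N^- \times \{e\} \hookrightarrow \Gamma$ (i.e. $j = \psi \circ (1_{N^-} \times i_{\{e\}})$, using the embedding $i_{\{e\}} : \{e\} \to B$ of the topological identity, exactly as in the local-section construction in the proof of Prop.~\ref{Proposition::equivbundle}); this gives $j^*(f) \in \cO(N^-)$. Conversely, given $\phi \in \cO(N^-)$, I would build $\tilde\phi \in \cO_G(\Gamma)$ by transporting along $\psi$: in functor-of-points language set $\tilde\phi_T(n^- b) := \chi_T(b)^{-1} \phi_T(n^-)$ for $n^- \in N^-(T)$, $b \in B(T)$, which is well defined because every $T$-point of $\Gamma$ factors uniquely as $n^- b$ via the diffeomorphism $\psi$. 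One checks $\tilde\phi$ satisfies the covariance condition in \eqref{lineb} by the cocycle property of $\chi$ (it is a character), and that $j^* \tilde\phi = \phi$ since $\chi_T(e) = 1$.

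Then I would verify that these two maps are mutually inverse. One direction, $j^*\tilde\phi = \phi$, is immediate as above. For the other, starting from $f \in \cL^\lambda(\Gamma/B)$, I must show $\widetilde{j^*f} = f$; but for any $T$-point $g \in \Gamma(T)$, writing $g = n^- b$ uniquely, the covariance of $f$ gives $f_T(n^- b) = \chi_T(b)^{-1} f_T(n^-) = \chi_T(b)^{-1} (j^*f)_T(n^-) = (\widetilde{j^*f})_T(n^- b)$, and equality of functor-of-points morphisms gives equality of the corresponding supermanifold morphisms, hence $f = \widetilde{j^* f}$. Both maps are clearly $\C$-linear (indeed parity-preserving), so \eqref{id-tilde} follows. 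It is worth noting this isomorphism is just the local trivialization $\eta, \zeta$ of Prop.~\ref{Proposition::equivbundle} specialized to the section of $p$ coming from $\psi$, so in principle one could also simply cite that construction.

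The main obstacle I expect is purely bookkeeping rather than conceptual: namely checking carefully that the formula $\tilde\phi_T(n^- b) = \chi_T(b)^{-1}\phi_T(n^-)$ actually defines a morphism of supermanifolds $\Gamma \to \C^{1|1}$ (not merely a natural transformation on $T$-points) and that it lands in the covariant sheaf $\cL^\lambda(\Gamma/B)$; this is handled by the equivalence between the functor-of-points picture and morphisms of supermanifolds (the Global Chart theorem, \cite{ccf} Thm.~4.2.5) together with the $B$-equivariance of $\psi$. The triviality of the extension of $\chi$ from $T$ to $B$ is not needed here — only that $\chi$ is a genuine character of $B$, so that $\chi_T(bb') = \chi_T(b)\chi_T(b')$, which is exactly what makes $\tilde\phi$ well defined and covariant.
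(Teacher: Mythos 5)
Your proof is correct and is essentially the argument the paper intends: the paper omits the proof, calling the lemma ``an easy consequence'' of the diffeomorphism $N^-\times B\cong\Gamma$, and the intended argument is exactly what you give, namely the local trivialization maps $\eta,\zeta$ of Prop.~\ref{Proposition::equivbundle} specialized to the section of $p$ coming from that diffeomorphism. Nothing further is needed.
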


Let $t_\al$ denote the global exponential coordinates on $N^-$, 
$\al \in \Delta^-$, the negative roots
(see \cite{gw}, \cite{fg, fg2, fg3}). Formally, using
the functor of points notation, we have:
$$
t_\al(\exp(
x_\al X_\al))=x_\al, \quad\al \in \Delta^-, \quad x_\al \in \cO(S),
\, S \in \smflds
$$
where $X_\al$ is the root vector of $\al$ in a fixed Chevalley
basis for $\fg$ (see \cite{fg}).
 
Let $\cP\subset \cO(N^-)$ be the polynomials
in the $t_\al$'s. Denote with $\widehat{\cP}$ the corresponding elements
in $\cL^\lambda(\Gamma/B)$ according to the
identification (\ref{id-tilde}).

\medskip
As in the ordinary setting, $T$ acts on the
big cell $\Gamma$ by left translation:
$$
a \cdot n^-b:= an^-b, \qquad a \in T(S), \, n^-\in N(S), \, b \in B(S)
$$
for $S \in \smflds$. This action is well defined since
$$
an^-b=an^-a^{-1}ab, 
$$
and one can check that $an^-a^{-1} \in N(S)$, $ab \in B(S)$
(see \cite{cfv} for more details and also \cite{fg} for
the explicit realization of these subgroups, which make
the statements obvious). Then, we can define a representation
of $T$ in $\cL^\lambda(\Gamma/B)$ in the same fashion 
as Def. \ref{contra-rep}, here using the functor of points notation:
$$
(a \cdot f)(g)=f(a^{-1}g)
$$
We can explicitly compute the action of the
maximal torus $T$ on $\wh{\cP}$. 

\begin{proposition} \label{Proposition-Taction} 
The torus $T$ acts on $\widehat{\cP} \subset \cL^\lambda(\Gamma/B)$ 
and we have that:
$$
a \cdot \widehat{t^{r_{\al_1}}_{\al_1} \dots t^{r_{\al_s}}_{\al_s}} = 
\chi_{\lambda+\sum r_{\al_i}\al_i}(a) \,  
t^{r_{\al_1}}_{\al_1} \dots t^{r_{\al_s}}_{\al_s}
$$
Hence $\widehat{\cP}$ decomposes into the sum of eigenspaces 
$\widehat{\cP_d}$, where $d$ ranges in $D^+$
the semigroup in $\fh^*$ generated by the positive roots.
$$
\wh{\cP}=\oplus_{d \in D^+}\wh{\cP_d}, \qquad 
\wh{\cP_d}=\oplus_{\sum r_{\al_i}\al_i=d} \, \C \cdot 
\wh{t^{r_{\al_1}}_{\al_1} \dots t^{r_{\al_s}}_{\al_s}}
$$
A similar decomposition holds also for $\cP\subset \cO(N^-)$.
\end{proposition}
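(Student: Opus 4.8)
The plan is to reduce the statement to a direct computation of the $T$-action on the exponential coordinates $t_\al$ of $N^-$, transported to $\cL^\lambda(\Gamma/B)$ via the identification \eqref{id-tilde}. First I would recall, using the functor of points notation, how $T$ acts on $N^-$: since $a \cdot n^- := a n^- a^{-1}$ for $a \in T(S)$, $n^- \in N^-(S)$, and since in a fixed Chevalley basis $a \exp(x_\al X_\al) a^{-1} = \exp(\chi_\al(a) x_\al X_\al)$ where $\chi_\al = \exp(\al)$ is the root character (this is exactly the adjoint action of the torus on the root vector $X_\al$, which is diagonal because $[H, X_\al] = \al(H) X_\al$ for $H \in \fh$), one gets $t_\al(a \cdot n^-) = \chi_\al(a) \, t_\al(n^-)$, i.e. $a \cdot t_\al = \chi_{-\al}(a) \, t_\al$ as a function, but with the correct bookkeeping of inverses coming from $(a \cdot f)(g) = f(a^{-1} g)$ and the trivialization identifying sections over $\Gamma/B$ with functions on $N^-$, the sign works out so that $t_\al$ transforms with weight $+\al$ relative to the base weight.

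Next I would incorporate the twist by $\lambda$ coming from the line bundle $\cL^\lambda$. Under the identification $\cL^\lambda(\Gamma/B) \cong \cO(N^-)$, the constant section $\widehat 1$ corresponds to $1 \in \cO(N^-)$, but as an element of $\cL^\lambda(\Gamma/B)$ it is \emph{not} $T$-invariant: because sections satisfy $f_T(gb) = \chi_\lambda(b)^{-1} f_T(g)$ and $T \subset B$, the action of $a \in T(S)$ on $\widehat 1$ picks up exactly the factor $\chi_\lambda(a)$. This is the base point of the decomposition. Combining this with the previous paragraph and the multiplicativity of the characters $\chi_\bullet$ (i.e. $\chi_{\mu + \nu} = \chi_\mu \chi_\nu$), one gets for a monomial
\[
a \cdot \widehat{t^{r_{\al_1}}_{\al_1} \cdots t^{r_{\al_s}}_{\al_s}}
= \chi_\lambda(a) \prod_i \chi_{\al_i}(a)^{r_{\al_i}} \,
\widehat{t^{r_{\al_1}}_{\al_1} \cdots t^{r_{\al_s}}_{\al_s}}
= \chi_{\lambda + \sum r_{\al_i} \al_i}(a) \,
\widehat{t^{r_{\al_1}}_{\al_1} \cdots t^{r_{\al_s}}_{\al_s}},
\]
which is the displayed formula. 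In particular the action preserves $\widehat\cP$ since it rescales each monomial.

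The eigenspace decomposition is then formal: two monomials lie in the same eigenspace if and only if they have the same total weight $\sum r_{\al_i}\al_i$, and the set of weights that occur is precisely the additive semigroup $D^+ \subset \fh^*$ generated by the positive roots (equivalently by $\Delta^-$, which spans the same semigroup up to sign — here one should be slightly careful about whether the $\al_i$ range over $\Delta^-$ or $\Delta^+$; the statement as written uses $D^+$, so I would track the sign convention from \eqref{id-tilde} and the definition of the $t_\al$ so that the weights come out positive, i.e. the relevant $a$-weight of $\widehat{t_\al}$ relative to $\widehat 1$ is a positive root). Grouping monomials of equal weight gives $\wh\cP = \bigoplus_{d \in D^+} \wh{\cP_d}$ with $\wh{\cP_d}$ spanned by the monomials of weight $d$, and the identical argument without the $\chi_\lambda$ twist gives the corresponding decomposition of $\cP \subset \cO(N^-)$.

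The only genuinely delicate point — the "main obstacle" in an otherwise routine verification — is getting all the inverses and sign conventions consistent: the inverse in $(a\cdot f)(g) = f(a^{-1}g)$, the inverse in the covariance condition $f_T(gb) = \chi_\lambda(b)^{-1}f_T(g)$, the direction of the identification $\Gamma/B \cong N^-$, and the choice of whether the coordinates $t_\al$ are indexed by negative roots with $a\exp(x_\al X_\al)a^{-1}$ producing $\chi_\al$ or $\chi_{-\al}$. I would pin these down once at the start (using the explicit realizations in \cite{fg, cfv}) and then the rest is bookkeeping; the supergeometry adds nothing new here since $T$ is an ordinary torus and the whole computation takes place on the functor of points over an arbitrary $S \in \smflds$, with the odd coordinates among the $t_\al$ (those with $\al$ odd) treated on exactly the same footing, the only constraint being $t_\al^2 = 0$ for odd $\al$, which merely restricts which monomials are nonzero and does not affect the weight computation.
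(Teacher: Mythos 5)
Your proposal is correct and follows essentially the same route as the paper: the paper likewise computes $(a\cdot\widehat{t_\al})(n^-b)=\widehat{t_\al}(a^{-1}na\cdot a^{-1}b)$, extracting the factor $\chi_\lambda(a)$ from the covariance condition of $\cL^\lambda$ and the factor $\chi_\al(a)$ from the identity $t_\al(a^{-1}na)=\chi_\al(a)t_\al(n)$, then extends to monomials by multiplicativity and reads off the eigenspace decomposition formally. Your explicit flagging of the sign/inverse conventions is a reasonable precaution, but it is the same bookkeeping the paper handles implicitly.
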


\begin{proof}
Let us do this just for $t_\al$, the general calculation being the same.
$$
\begin{array}{rl}
(a \cdot \widehat{t_\al})(n^-b)&=
\widehat{t_\al}(a^{-1}n^-b)= \wh{t_\al}(a^{-1}na \cdot a^{-1}b)=
\chi_\lambda(a)\wh{t_\al}(a^{-1}na)=\\ \\
&=\chi_\lambda(a)t_\al(a^{-1}na)=\chi_\lambda(a) \chi_\al(a)
t_\al(n)=\chi_{\lambda+ \al}(a)t_\al(n).
\end{array}
$$
where, as in the ordinary setting, one can easily show that
$t_\al(a^{-1}na)=\chi_\al(a)t_\al(n)$.

Since $\widehat{t_\al}$ is determined by its restriction to
$N^-$, under the identification (\ref{id-tilde}), we have obtained:
$$
a \cdot \widehat{t_\al}=\chi_{\lambda+ \al}(a)\wh{t_\al}, \qquad
a \cdot {t_\al}=\chi_{\lambda+ \al}(a)t_\al 
$$ 
from which our statement follows.
\end{proof}

\begin{definition}
There are two well defined actions of $\fg$, hence 
of $\cU(\fg)$, on $\cO(\Gamma)$ the global (holomorphic)
sections on the big cell $\Gamma$, that read as follows:
$$
\begin{array}{rl}
\ell(X)f&=(-X \otimes 1)\mu^*(f) , \qquad X \in \fg \\ \\
\partial(X)f&=(1 \otimes X)\mu^*(f)
\end{array}
$$
\end{definition}

Notice that the actions $\ell$ and $\partial$
are well defined also on $\cL^\lambda(\Gamma/B)$ and they commute with 
each other,
furthermore, being algebraic, they leave $\wh{\cP}$ invariant (see
\cite{cfv} for more details).

\begin{theorem} \label{Theorem5} 
\begin{enumerate}
\item
There is a $\cU(\fg)$-pairing between $\wh{\cP}$ and $\cU(\fg)$:
$$
\langle,\rangle:\wh{\cP} \times \cU(\fg) \lra \C, \qquad
\langle f,u \rangle:=(-1)^{|u||f|}(\partial(u)f)(1_G)
$$
\item The above pairing gives a non singular pairing
between $\wh{\cP}$ and the Verma module $V_\lambda=\cU(\fg)/\cM_\lambda$.

\item The submodule $\cI_\lambda$ of  $\wh{\cP}$ generated 
by  the constant function $1$  is irreducible and it is the unique
irreducible submodule of  $\wh{\cP}$
of lowest weight $-\lambda$.
\end{enumerate}
\end{theorem}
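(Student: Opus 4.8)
The plan is to prove the three parts in order, bootstrapping each one from its predecessor and from Proposition \ref{Proposition-Taction}.

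\textbf{Part (1): the pairing is $\cU(\fg)$-invariant.} First I would verify that $\langle\,,\,\rangle$ is well defined, i.e. that $(\partial(u)f)(1_G)$ makes sense — this follows since $\partial$ preserves $\wh{\cP}$ and evaluation at the identity $1_G$ is just $i_e^*$. The key point is the $\cU(\fg)$-pairing identity, which I expect to read as $\langle \ell(u)f, v\rangle = \pm\langle f, \epsilon(u)v\rangle$ or, more naturally, $\langle f, uv\rangle$ related to $\langle \partial(u)f, v\rangle$; here the crucial input is that $\ell$ and $\partial$ commute (stated in the remark before Theorem \ref{Theorem5}) and that $\partial$ is a left action of $\cU(\fg)$ while $\ell$ comes from the $-X\otimes 1$ piece of $\mu^*$. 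The computation is the super-analogue of the classical fact that for $f$ on a group, $(X_1\cdots X_n f)(1)$ recovers the matrix coefficient against the element $X_1\cdots X_n\in\cU(\fg)$; the sign $(-1)^{|u||f|}$ is exactly the Koszul sign bookkeeping needed to make the $\Z_2$-graded version of this compatibility hold. I would check it on generators $X\in\fg$ and extend by the (anti)multiplicativity of $\partial$.

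\textbf{Part (2): nondegeneracy against the Verma module $V_\lambda$.} I would first identify the radical on the right: by part (1) the pairing factors through $\cU(\fg)/\cM_\lambda$ provided $\cM_\lambda$ (the annihilator of the highest/lowest weight line, i.e. the left ideal generated by $\fn^+$ and $X - \lambda(X)$ for $X\in\fh$, up to opposite-system conventions) pairs to zero with all of $\wh{\cP}$; this is because $1\in\wh{\cP}$ is a $T$-eigenvector of weight $-\lambda$ (Proposition \ref{Proposition-Taction} with no $t_\al$'s) and is killed by $\ell(\fn^+)$ since it is a constant function pulled up from $\Gamma/B$, combined with the $\ell$-$\partial$ interplay from part (1). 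For nondegeneracy on the left: a nonzero $f\in\wh{\cP}$ lies in some $\wh{\cP_d}$, and using the PBW basis of $\cU(\fg)$ adapted to $\fg = \fn^-\oplus\fh\oplus\fn^+$ together with the explicit coordinate description $t_\al(\exp(x_\al X_\al)) = x_\al$, one sees $\partial$ of the PBW monomials in $\fn^-$ acts on $\wh{\cP}$ essentially as the dual basis of partial derivatives in the $t_\al$, so evaluation at $1_G$ separates monomials. The main obstacle I anticipate is precisely this identification of $\partial|_{\cU(\fn^-)}$ with a triangular, invertible pairing on the polynomial ring $\cP$ — one must handle the odd $t_\al$ (where $t_\al^2=0$) and make sure the PBW ordering on the $\cU(\fg)$ side matches the monomial ordering on the $\cP$ side up to a nonsingular change of basis; the even case is the classical Verma-module computation but the odd variables require care that the pairing matrix stays block-triangular with invertible diagonal.

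\textbf{Part (3): irreducibility and uniqueness of $\cI_\lambda$.} With the nonsingular pairing $\wh{\cP}\times V_\lambda\to\C$ in hand, $\wh{\cP}$ is (a subspace of) the restricted/graded dual of $V_\lambda$ as a $\cU(\fg)$-module (via $\ell$), so its $\ell$-submodule structure is dual to the quotient structure of $V_\lambda$. The Verma module $V_\lambda$ has a unique maximal proper submodule, hence a unique irreducible quotient $L(-\lambda)$ of lowest weight $-\lambda$; dualizing, $\wh{\cP}$ has a unique irreducible submodule, and it must be the one containing the weight vector $1$ of weight $-\lambda$ — this is $\cI_\lambda$ by definition, since the submodule generated by $1$ is exactly the image of $L(-\lambda)$ under the duality. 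I would close by noting that $\cI_\lambda$ being generated by a single lowest-weight vector and sitting inside the dual forces it to be irreducible (any proper submodule would produce, by the nonsingular pairing, a proper quotient of $V_\lambda$ still surjecting onto $L(-\lambda)$, a contradiction). Throughout, the $\cU(\fg)$-module structure used is $\ell$; the action $\partial$ serves only as the auxiliary device realizing the pairing, and the fact that $\ell$ and $\partial$ commute is what guarantees $\ell$-equivariance of everything.
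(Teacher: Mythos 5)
Your proposal follows essentially the same route as the paper's (sketched) proof: establish the $\cU(\fg)$-pairing by a direct check, show it annihilates $\cM_\lambda$ so that it factors through the Verma module $V_\lambda$, and then dualize the unique-maximal-submodule property of $V_\lambda$ to obtain the unique irreducible submodule of $\wh{\cP}$, namely the one generated by the weight-$(-\lambda)$ vector $1$. The only divergence is in the nondegeneracy step, where the paper invokes the matching of weight spaces supplied by Proposition \ref{Proposition-Taction} while you give the more explicit PBW-versus-monomials triangularity argument (with the correct caveat about the odd coordinates $t_\al$ with $t_\al^2=0$); this is a legitimate filling-in of a detail the paper leaves implicit rather than a different approach.
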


\begin{proof} (Sketch). The fact that we have a $\cU(\fg)$-pairing
between $\cU(\fg)$ and $\wh{\cP}$ 
amounts to a tedious check. Then one can show it factors to a non
singular
pairing between $V_\lambda=\cU(\fg)/\cM_\lambda$ and $\wh{\cP}$, by
showing $\langle f, u\rangle=0$ for  $u \in \cM_\lambda$.
This establishes a duality between these two $\cU(\fg)$ modules,
which is actually an isomorphism, since they have the
same weight spaces by \ref{Proposition-Taction}. Hence,
since $V_\lambda$ has a unique irreducible quotient, by duality $\wh{\cP}$ will
have a unique irreducible submodule
$\cI_\lambda$ of lowest weight $-\lambda$ (see \cite{cfv} for more details).
\end{proof}

We now define the following action of $G$ on $\cL^\lambda(G/B)$:
$$
(g \cdot f)=f(g^{-1}x)
$$
using the functor of points notation, or equivalently in the
language of SHCP:
\beq\label{action-shcp}
\begin{cases}
(g \cdot f) =l_{g^{-1}}^*f & g\in \red{G_r}\\
X.f  = D^R_{\overline{X}} f & X\in \fg
\end{cases}
\eeq
(where, $\overline{X}$ is the antipode
of $X \in \cU(\fg)$).

\begin{theorem} \label{bwb-thm}
(The Borel-Weil-Bott Theorem).
Let $G$ be a simple simply connected complex supergroup,
$B$ a borel subsupergroup, $\fh$ a CSA of $\fg=\Lie(G)$.
Then all irreducible
finite dimensional representations of $G$
are realized as $\cI_\lambda \subset \cL^\lambda(G/B)$, $\lambda \in \fh^*$ 
dominant integral for the numerical marks $a_i$ as in \cite{kac}.
\end{theorem}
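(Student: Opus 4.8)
The plan is to assemble the Borel-Weil-Bott statement from the three ingredients already prepared: the identification of sections on the big cell with functions on $N^-$, the torus-eigenspace decomposition of Proposition \ref{Proposition-Taction}, and the duality of Theorem \ref{Theorem5} between $\wh\cP$ and the Verma module $V_\lambda$. The first step is to pass from the big cell $\Gamma/B$ to the full quotient $G/B$. The restriction map $\cL^\lambda(G/B)\to\cL^\lambda(\Gamma/B)$ is injective because $\wt\Gamma$ is dense in $\wt G$ (the complement of the big cell is a proper analytic subset, so a holomorphic section vanishing on $\Gamma$ vanishes identically), and it is $\fg$-equivariant for the actions $\ell,\partial$, hence for the $G$-action \eqref{action-shcp}. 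So any irreducible $G$-submodule of $\cL^\lambda(G/B)$ maps isomorphically onto a $\fg$-submodule of $\wh\cP$; conversely $\cI_\lambda\subset\wh\cP$, being the unique irreducible submodule of lowest weight $-\lambda$, is stable under $\fg$ and one must check it comes from an honest $G$-submodule of $\cL^\lambda(G/B)$ — this follows since $G$ is simply connected, so the infinitesimal $\fg$-action integrates, and the lowest weight vector $1_G$ (i.e. the constant function) is an actual global section on all of $G/B$, not just on $\Gamma/B$.

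The second step is to produce all irreducibles this way. Let $(\pi,W)$ be an irreducible finite dimensional holomorphic $G$-representation. Since $G$ is simple simply connected, $W$ is determined by its lowest weight; let $-\lambda$ be the lowest weight of $W$ with respect to $\fb$, with lowest weight vector $w_0$, so $\fn^-\cdot w_0=0$ and $\fh$ acts on $w_0$ by $-\lambda$. One then builds a $G$-morphism $W\to\cL^\lambda(G/B)$ by the standard matrix-coefficient recipe: pick $\xi\in W^*$ a highest weight covector of the contragredient and send $w\mapsto (g\mapsto \xi(\pi(g)^{-1}w))$, checking via Definition \ref{contra-rep} and the SHCP description \eqref{lineb} of $\cL^\chi$ that this lands in $\cL^\lambda(G/B)$ — the right $B$-covariance is exactly the condition that $\xi$ transform by $\chi_\lambda$ under $\fb$, and $\chi_\lambda$ is a well defined character precisely when $\lambda$ is dominant integral in the sense of the numerical marks $a_i$ of \cite{kac}. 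This map is nonzero, hence injective by irreducibility of $W$, and by the previous paragraph its image lies in $\cI_\lambda$; comparing it with the inclusion $\cI_\lambda\hookrightarrow\cL^\lambda(G/B)$ and using that $\cI_\lambda$ is irreducible forces the image to be all of $\cI_\lambda$, so $W\cong\cI_\lambda$.

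The last step is finiteness and the constraint on $\lambda$: one must argue that $\cI_\lambda$ is finite dimensional exactly when $\lambda$ is dominant integral for the marks $a_i$, and infinite dimensional (or simply undefined, because $\chi_\lambda$ fails to exponentiate to $T$) otherwise. For this I would invoke the weight-space identification: by Theorem \ref{Theorem5}(2) the weights of $\wh\cP$, and hence of $\cI_\lambda$, are read off from $V_\lambda=\cU(\fg)/\cM_\lambda$ via Proposition \ref{Proposition-Taction}, and the classification of finite dimensional irreducible $\cU(\fg)$-modules for contragredient Lie superalgebras (Kac) says this quotient has finite dimensional irreducible image iff $\lambda$ satisfies the stated integrality-dominance conditions. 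The main obstacle I anticipate is the density/restriction argument being genuinely rigorous in the super-analytic category — one needs that a global holomorphic section of the line bundle on $G/B$ is determined by its restriction to the big cell, which at the level of the reduced space is clear but on the structure sheaf requires knowing the complement of $\Gamma$ is nowhere dense and that $\cO_G$ has no sections supported there; this, together with verifying that the matrix-coefficient map genuinely produces a section over all of $G/B$ rather than merely over $\Gamma/B$, is where the real work lies, and I would lean on \cite{cfv} and \cite{penkov} for the technical underpinnings.
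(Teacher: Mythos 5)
Your proposal is correct and rests on the same three pillars as the paper's argument: the duality of Theorem \ref{Theorem5} between $\wh{\cP}$ and the Verma module $V_\lambda$, the torus weight decomposition of Proposition \ref{Proposition-Taction}, and Kac's conditions on the numerical marks to decide finite dimensionality. The differences are in execution. Where the paper establishes $G$-stability of $\cI_\lambda$ by the SHCP mechanism (the reduced group $\red{G}$ acts by the classical theory, $\cU(\fg)$ acts by Theorem \ref{Theorem5}, and the two are compatible), you integrate the infinitesimal action using simple connectedness -- equivalent in substance. Where the paper identifies an arbitrary irreducible $W$ with $\cI_\lambda$ abstractly (both are irreducible weight modules with the same weights, hence isomorphic), you build an explicit intertwiner $w\mapsto\bigl(g\mapsto\xi(\pi(g)^{-1}w)\bigr)$ by matrix coefficients; this is more constructive and has the side benefit of producing honest global sections on all of $G/B$, not merely on $\Gamma/B$. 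That last point is worth emphasizing: the paper constructs $\cI_\lambda$ inside $\cL^\lambda(\Gamma/B)\cong\cO(N^-)$ but states the theorem for $\cL^\lambda(G/B)$, silently using that restriction to the big cell is injective and that the relevant sections extend; you correctly flag this as the place where real analytic work is needed (density of $\wt{\Gamma}$ in $\wt{G}$ plus absence of sections of $\cO_G$ supported on the complement), and your matrix-coefficient map is precisely what closes that gap for the surjectivity direction. So your route buys a cleaner treatment of the $\Gamma/B$ versus $G/B$ issue at the cost of the extra verification that the matrix coefficient lands in $\cL^\lambda(G/B)$, i.e.\ that the right $B$-covariance in \eqref{lineb} matches the transformation law of $\xi$.
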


\begin{proof}
We first need to show that  $\cI_\lambda \subset \cL^\lambda(G/B)$ is stable
under the $G$ action. 
We will do this by using the SHCP approach: $\cI_{0,\lambda}$
is stable under the $\red{G}$ action, by the classical theory; $\cI_\lambda$ is
stable under the $\cU(\fg)$ action (see Thm \ref{Theorem5}) and
such action is the differential of the $G$ action (immediate from
(\ref{action-shcp})). Hence given   $\lambda \in \fh^*$, $\cI_\lambda$ is 
a $G$ representation and it
is finite dimensional because it is dual
to a Verma module $V_\lambda$, where $\lambda$ is dominant integral and 
the numerical marks verify the conditions
in \cite{kac}\footnote{These conditions are necessary because the
Weyl group does not act transitively on the set of borel subsuperalgebras.}.
If $W$ is any finite dimensional $G$ representation, by taking
its differential, we obtain a finite
dimensional $\fg$ representation, corresponding to
a dominant integral $\lambda \in \fh^*$, with numerical marks
$a_i$ as in \cite{kac}. Then we can build $\cI_\lambda$, which is
a weight module as $W$ with same weight spaces and weights, so
they are isomorphic. 
\end{proof}

\subsection{Projective embeddings of homogeneous spaces}

In ordinary geometry ample line bundles on varieties
give projective embeddings, and in particular, the complex analytic manifold
$\red{G}/\red{P}$ always admits projective embeddings. 
It is well known that this is not
the case in projective supergeometry, namely there are complex
analytic supermanifolds obtained as
 $G/P$, $P\supset B$, which do not admit any projective embedding. The easiest
example is $\mathrm{Gr}(1|1;2|2)$ the Grassmannian supermanifold
of $1|1$ spaces into $\C^{2|2}$. This is obtained as the quotient
$\rSL(2|2)/P$, for a suitable parabolic $P$ (see \cite{ccf} Sec. 10.5
for more details).

Nevertheless, once this anomaly is
resolved, we can extend the theory of projective embeddings
to supergeometry.

\medskip
In order to do this, let us define:
$$  
\cO(G/P)_n  \; := \; \cL^{\chi^n}(G/P)=\left\{f:  G\rightarrow 
{\C}^{1 \mid  1} \, 
|\right.   \,   f_T(gb)=\chi_T(b)^{-n} f_T(g)
\left. \right\}  
$$
This is the superalgebra of global sections of the line
bundle $\cL^{\chi^n}$ which is associated with the character $\chi^n$.
Let us also define:
$$   
\cO(G/P)  \; := \;  \bigoplus_{n \geq 0}
\, \cO(G/P)_n \; \subseteq \;  \cO(G)  \quad  
$$
This is a $\N$-graded algebra; in fact we can easily
verify that if $f \in \cO(G/P)_n$ and $g \in \cO(G/P)_m$
their product $fg\in  \cO(G/P)_{n+m}$, $m,n \in \N$. 
We say that $\cL$ is \textit{very ample} if $\cO(G/P)$
is generated in degree 1, i.e. there exist
 $ f_0 $,  $ f_1 $,  $ \dots $,  
$ f_m$, $\phi_1$, $\dots$, $\phi_n \in \cO(G/P)_1 \, $ generating $\cO(G/P)$
as commutative $\Z$-graded superalgebra. 

\begin{proposition}
Let $\cL$ be a very ample line bundle on $G/P$ as above.
Then $\cL$ gives a projective embedding of 
the complex analytic supervariety $G/P$ into $\bP^{m|n}$.
\end{proposition}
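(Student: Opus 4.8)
The plan is to realise $G/P$ as the image of a morphism of supermanifolds
\[
\varphi\colon G/P\;\lra\;\bP^{m|n},
\]
which in the functor of points language sends a point $x\in(G/P)(T)$, locally lifted to $g\in G(T)$, to the $T$-point $[\,f_{0}(g):\dots:f_{m}(g):\phi_{1}(g):\dots:\phi_{n}(g)\,]$ of $\bP^{m|n}$ determined by these sections of the line bundle $x^{\ast}\cL$ on $T$. This is independent of the lift: replacing $g$ by $gb$, $b\in P(T)$, multiplies every $f_{i}(g)$ and every $\phi_{j}(g)$ by the \emph{same} unit $\chi_{T}(b)^{-1}$, leaving the projective point unchanged, and it does land in $\bP^{m|n}$ provided the even sections $f_{0},\dots,f_{m}$ have no common zero on $\red{G}/\red{P}$. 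Equivalently, and more usefully for computations, $\varphi$ is assembled from the open cover $\{V_{i}=\{\red{f_{i}}\neq 0\}\}$ of $G/P$: on $V_{i}$ the section $f_{i}$ is nowhere vanishing and hence trivialises $\cL|_{V_{i}}$, so the ratios $f_{j}/f_{i}$ (even) and $\phi_{k}/f_{i}$ (odd) are genuine elements of $\cO(V_{i})$, and one defines $\varphi|_{V_{i}}\colon V_{i}\to \A^{m|n}$ onto the $i$-th affine chart of $\bP^{m|n}$ by the comorphism $w^{(i)}_{j}\mapsto f_{j}/f_{i}$, $\psi^{(i)}_{k}\mapsto\phi_{k}/f_{i}$. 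These maps agree on overlaps, since $f_{k}/f_{i}=(f_{k}/f_{j})(f_{j}/f_{i})$ and likewise for the odd ratios, so they glue to $\varphi$; by Prop.~\ref{Proposition::equivbundle} and Yoneda's lemma this is the same as the functor of points map above.

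The first task is to check that the $V_{i}$ cover $G/P$ (base-point freeness of $f_{0},\dots,f_{m}$); since $\cO(G/P)$ is generated in degree $1$, this reduces to the corresponding statement for the classical ample line bundle $\red{\cL}=\red{\cL^{\chi}}$ on the flag variety $\red{G}/\red{P}$. The main task is to prove that $\varphi$ is a closed embedding, for which I would use the standard criterion for morphisms of (smooth) supermanifolds: $\varphi$ is a closed embedding if and only if the reduced morphism $\red{\varphi}\colon \red{G}/\red{P}\to\red{\bP^{m|n}}=\bP^{m}$ is a topological closed embedding and $\varphi$ is an immersion, i.e.\ $d\varphi_{x}$ is injective for every $x\in\red{G}/\red{P}$. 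For the reduced part, $\red{\varphi}$ is exactly the classical morphism attached to the sections $\red{f_{0}},\dots,\red{f_{m}}$ of $\red{\cL}$; the subring of $\bigoplus_{n}H^{0}(\red{G}/\red{P},\red{\cL}^{\otimes n})$ that these generate is, like $\cO(G/P)$, generated in degree $1$ (the odd generators reduce to $0$), and the classical theory of very ample line bundles on flag varieties, or the classical Borel--Weil theorem, shows $\red{\varphi}$ is a closed embedding, in particular injective and an immersion on the even tangent spaces. The even part of the immersion claim for $\varphi$ is thus classical; what remains is the odd part: one must show that at every $x\in V_{i}$ the classes of $\phi_{1}/f_{i},\dots,\phi_{n}/f_{i}$ span the odd cotangent space $(\fm_{x}/\fm_{x}^{2})_{\bar 1}$ of $G/P$. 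Granting this, $d\varphi_{x}$ is injective, $\varphi$ is an immersion with compact (hence closed) image, the comorphism $\varphi^{\ast}$ is surjective on stalks, and $G/P\cong\varphi(G/P)\subset\bP^{m|n}$ is the desired projective embedding.

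I expect the odd part to be the genuine obstacle, and it is exactly where the hypothesis that $\cO(G/P)$ is generated in degree $1$ must be exploited. The argument I have in mind is local and uses that, unlike the even directions, the odd directions involve no analytic completion: over $V_{i}$ the sheaf $\cO_{G/P}$ is a finite extension of $\cO_{\red{G}/\red{P}}$, locally of the form $\cO_{\red{V_{i}}}\otimes\bigwedge^{\bullet}\C^{q}$ with $q=\dimn_{\bar 1}(G/P)$, so its odd part is a finitely generated module over its even part and is controlled by finitely many odd generators. Consequently, if at some point $x$ the classes of the $\phi_{k}/f_{i}$ did not span $(\fm_{x}/\fm_{x}^{2})_{\bar 1}$, a ``missing'' odd coordinate at $x$ could not be produced as a product of degree-$1$ global sections (each such product contributes to the odd part only through one of the $\phi_{k}$), contradicting the degree-$1$ generation of $\cO(G/P)$. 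Turning this into a clean proof --- a super Nakayama argument on the stalks $\cO_{G/P,x}$, combined with the identification of the odd part of $\cO(V_{i})$ with the odd part of the degree-zero localisation of $\cO(G/P)$ at $f_{i}$ --- is the technical heart; the remaining ingredients are either the classical very ampleness statement on $\red{G}/\red{P}$ or formal manipulations via the functor of points and Prop.~\ref{Proposition::equivbundle}.
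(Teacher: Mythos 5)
The paper gives no argument for this proposition --- it defers entirely to \cite{ccf} Ch.~9 with the remark that the proof is as in the ordinary setting --- so your proposal has to stand on its own. Its architecture is the right one: build $\varphi$ from the degree-one sections, reduce the even/topological part to the classical flag variety, and isolate the surjectivity of $\varphi^*$ on the odd cotangent directions as the genuinely super step. But that last step, which you yourself flag as the technical heart, is where the sketch becomes circular. Degree-one generation of $\cO(G/P)$ is a purely internal condition on the graded superalgebra $\bigoplus_n \cL^{\chi^n}(G/P)$; it cannot by itself tell you that this algebra is large enough to detect the odd directions of $G/P$. Your contradiction argument starts from a ``missing odd coordinate at $x$'' and asks how it could be written in terms of degree-one sections, but a local odd function near $x$ is not a priori an element of any $\cO(G/P)_N$, and the ``identification of the odd part of $\cO(V_i)$ with the odd part of the degree-zero localisation of $\cO(G/P)$ at $f_i$'' that you list as an ingredient is precisely the surjectivity of $\varphi^*$ on stalks, i.e.\ the statement to be proved. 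The example $\mathrm{Gr}(1|1;2|2)$ quoted in the paper is exactly a situation where the section algebras are too small to see the odd structure, and nothing in the bare condition ``generated in degree one'' excludes this abstractly.

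What is missing is an input external to the graded ring: either one builds into ``very ample'' the requirement that the degree-one sections have no common zero and span the full (even \emph{and} odd) cotangent space at every point --- which is how the embedding criterion is actually formulated in \cite{ccf} Ch.~9 --- or one derives these separation properties from the homogeneous structure, using the transitive $G$-action to reduce the immersion condition to the base point $eP$ and the representation-theoretic description of $\cO(G/P)_1$ (it contains the irreducible module $\cI_\lambda$ of Theorems \ref{Theorem5} and \ref{bwb-thm}, whose differentials at $eP$ one must check span $(\fg_1/\fp_1)^*$ for the characters in question). Either way the odd immersion property has to be verified by hand, not extracted from degree-one generation alone; note the same issue is quietly present in your even/reduced step, where you invoke classical Borel--Weil rather than the subring generated by the $\red{f_i}$. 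The remaining parts of your outline --- well-definedness of $\varphi$ under change of lift, gluing over the $V_i$, and compactness of $\red{G}/\red{P}$ giving closedness of the image --- are correct.
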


\begin{proof}
See \cite{ccf} Ch. 9, the proof
is the same as in the ordinary setting. 
\end{proof} 

The superalgebra $\cO(G/P)$ is called the \textit{coordinate superalgebra}
of $G/P$ with respect to the given projective embedding.
We also notice that this provides $G/P$ with a structure of algebraic
supervariety, besides the one of complex analytic supermanifold.

\medskip 
We want to characterize such an embedding in purely algebraic terms.
This is especially fruitful if we want to discuss quantum
deformations (see \cite{fg1, fi1}).

\begin{proposition} \label{t}
Let the notation be as above.
Let $G/P$ be embedded into some projective space via some
very ample line bundle. Then there exists a $t \in \cO(G)$ such
that
$$  
\Delta_\pi(t) \, := \, \big((\text{\it id} \otimes \pi)
\circ \Delta \big)(t) \, = \, t \otimes \pi(t)
$$
$$  
\pi\big(t^m\big) \not= \pi\big(t^n\big) \quad \forall \;\; m \not= n 
\in \N
$$
$$  
\cO(G/P)_n  \; = \;  \Big\{\, f \in \cO(G) \,\;\Big|\;
(id \otimes \pi)\Delta(f) = f \otimes \pi\big(t^n\big) \Big\}    
$$
where $\pi: \cO(G) \lra \cO(P):=\cO(G)/I_P$.
Furthermore,
$\cO(G/P)=\bigoplus_{n \in \N} \; \cO(G/P)_n $  is generated in degree 1.

Vice-versa, if such $t$ exists, it gives a projective embedding of $G/P$.
\end{proposition}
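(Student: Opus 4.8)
The plan is to transport the whole statement into the language of Hopf superalgebras, where both implications reduce to short manipulations with the comultiplication $\Delta$ and the counit $\epsilon$. We are in the algebraic category, so $\cO(G)$ and $\cO(P)=\cO(G)/I_P$ are supercommutative Hopf superalgebras and $\pi$ is a morphism of Hopf superalgebras, giving $\Delta_P\circ\pi=(\pi\otimes\pi)\circ\Delta$, $\epsilon_P\circ\pi=\epsilon$ and $S_P\circ\pi=\pi\circ S$, which I use freely. The first step is to fix the dictionary: rewriting the functor of points covariance relation $f_T(gb)=\chi_T(b)^{-n}f_T(g)$ of \eqref{lineb} via Yoneda turns it into a coaction identity, so that for every character $\chi\colon P\to\C^\times$, with $\chi_P\in\cO(P)$ the corresponding (even, group-like) unit,
\[
\cO(G/P)_n=\cL^{\chi^n}(G/P)=\bigl\{\,f\in\cO(G)\ \big|\ (\id\otimes\pi)\Delta(f)=f\otimes\chi_P^{-n}\,\bigr\}.
\]
With this in place, the proposition is entirely about one distinguished element: an element $t$ with $\pi(t)=\chi_P^{-1}$ which, crucially, itself lies in the degree-one piece $\cO(G/P)_1$.

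For the forward implication I start from a very ample $\cL=\cL^\chi$ and the resulting embedding $G/P\hookrightarrow\bP^{m|n}$. By the Proposition immediately preceding this one the associated morphism is everywhere defined, so the homogeneous coordinates $f_0,\dots,f_m,\phi_1,\dots,\phi_n\in\cO(G/P)_1$ are not all annihilated by the counit $\epsilon$ of $\cO(G)$; since $\epsilon$ kills the odd $\phi_j$, some even $f_i$ has $\epsilon(f_i)\neq 0$, and I set $t:=\epsilon(f_i)^{-1}f_i\in\cO(G/P)_1$, so $\epsilon(t)=1$. Being in $\cO(G/P)_1$, $t$ satisfies $\Delta_\pi(t)=t\otimes\chi_P^{-1}$; applying $\epsilon\otimes\id$ and using $(\epsilon\otimes\id)\Delta=\id$ together with $\epsilon_P\pi=\epsilon$ gives $\pi(t)=\epsilon(t)\,\chi_P^{-1}=\chi_P^{-1}$. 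Hence $\Delta_\pi(t)=t\otimes\pi(t)$, and $\pi(t^n)=\chi_P^{-n}$; substituting the latter into the displayed formula yields the third identity. Pairwise distinctness of the $\pi(t^n)$ follows by contradiction: were $\chi$ of finite order, some $\cL^{\chi^k}$ with $k\ge 1$ would be trivial, so $1_G\in\cO(G/P)_k\cap\cO(G/P)_0$, impossible since $\bigoplus_n\cO(G/P)_n$ is a genuinely $\N$-graded subring of $\cO(G)$ (as it must be for the embedding to exist). Finally, ``generated in degree $1$'' is exactly the very ampleness hypothesis.

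For the converse I am handed $t\in\cO(G)$ (which I take even) satisfying the three displayed conditions, with $\bigoplus_n\cO(G/P)_n$ generated in degree $1$, and must first check that $\pi(t)$ defines a character. Applying $\pi\otimes\id$ to $\Delta_\pi(t)=t\otimes\pi(t)$ and using that $\pi$ is a Hopf morphism gives $\Delta_P(\pi(t))=\pi(t)\otimes\pi(t)$; applying $\epsilon\otimes\id$ gives $\epsilon_P(\pi(t))=\epsilon(t)=1$ (note $\pi(t)\neq 0$, since $\pi(t)=\pi(t^2)$ would contradict the second condition). So $\pi(t)$ is a group-like element of $\cO(P)$, automatically a unit, hence $\pi(t)=\chi_P^{-1}$ for a unique character $\chi\colon P\to\C^\times$; by Proposition \ref{Proposition::equivbundle} this defines a line bundle $\cL:=\cL^\chi$ on $G/P$, and the dictionary gives $\cL^{\chi^n}(G/P)=\cO(G/P)_n$ for all $n$. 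Since the $\pi(t^n)$ are pairwise distinct group-likes, hence linearly independent in $\cO(P)$, the sum $\bigoplus_n\cL^{\chi^n}(G/P)\subseteq\cO(G)$ is direct and graded; by hypothesis it is generated in degree $1$, i.e.\ $\cL$ is very ample, whence the Proposition immediately preceding this one yields a projective embedding of $G/P$.

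I expect the only genuinely delicate points to be the dictionary of the first paragraph — translating the covariance condition of \eqref{lineb} into $(\id\otimes\pi)\Delta(f)=f\otimes\chi_P^{-n}$ with the right sign and inverse conventions on $\chi_P^{-n}$ — and, in the forward direction, the (easy but not automatic) production of the section $t$ not vanishing at the identity coset; one should also keep track, in the converse, of the parity of $\pi(t)$ so that it really is a character into $\C^\times$. Everything else is a formal consequence of applying the counit to $\Delta_\pi(t)=t\otimes\pi(t)$ and of the definition of very ampleness, so no lengthy computation is needed.
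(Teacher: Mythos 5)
Your proof is correct and follows essentially the same route as the paper's: both rewrite the covariance condition of \eqref{lineb} as $(\id\otimes\pi)\Delta(f)=f\otimes\chi^{-n}$ (with $\chi^{-1}=S(\chi)$ the group-like element in $\cO(P)$) and normalize a nonvanishing degree-one section so that $\pi(t)=S(\chi)$. You merely fill in details the paper leaves as ``the result follows immediately'' — the distinctness of the $\pi(t^n)$ and the group-like argument in the converse — and these fillings are sound.
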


\begin{proof} The proof is the same as in the ordinary setting,
however for completeness and given the peculiarity of projective
embeddings of supermanifolds, we include it. 
Let $\Lambda=S(\chi) \in \cO(P)$,
where $S$ denotes the antipode in $\cO(P)$.

\medskip
By assumption there exists a non-zero global section of the line
bundle on  $ G\big/H \, $,  i.e. a section  $ \, t \in \cO(G/P)_1
\setminus \{0\} \, $  on  $ G \, $
and $\ep(t) \neq 0$.  
Up to dividing out by  $ \epsilon(t) $,  we can assume that  $ \, \pi(t)
= \Lambda \, $. The condition defining $\cO(G/P)_n=\cL^{\chi^n}(G/P)$
can be rephrased as:
$$
f \circ \mu_{G,P}=f \otimes \chi^{-n} , \qquad f \in \cO(G), \,
\mu_{G,P}=\mu|_{G \times P}
$$
Take the sheaf theoretic picture, then $\mu_{G,P}^*=(\id \otimes \pi)\Delta$
and, as we noticed, we can choose $t$ so that $\pi(t)=S(\chi)$.
The result follows immediately.
\end{proof}

 The element $t \in \cO(G)$ essentially defines the line bundle
giving the projective embedding of $G/P$ and we call it a
\textit{classical section}

\medskip
We now want to show that the associated super bundle providing the
projective embedding of $G/P$ actually enables us to construct explicitly
the principal bundle structure for the projection morphism $\pi:G \lra G/P$
(here the total space is $E=G$, while the supergroup acting is $P$).

\begin{theorem} \label{mainthm-emb}
Let $G$ be a simple complex analytic supergroup. Then the
projection morphism $G \lra G/P$ is a principal bundle. 
If $G/P$ admits a projective embedding via a classical section $t$
then a local trivialization of the principal bundle is given
by the affine open subset corresponding to the invertibility
of the family $\{t^{(2)}\}$ defined as:
$\Delta(t)=\sum t^{(1)} \otimes t^{(2)}$.
\end{theorem}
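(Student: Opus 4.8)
The plan is to prove the two assertions of the theorem in order: first that $\pi\colon G\to G/P$ is a principal $P$-bundle, and second that, under the hypothesis of a classical section $t$, the affine open set $\{t^{(2)}\text{ invertible}\}$ furnishes a local trivialization. For the first part, since $P$ is a closed subSLG of $G$, the right action of $P$ on $G$ by multiplication is free in the sense of our definition: on $T$-points the map $G(T)\times P(T)\to G(T)$ is just the group multiplication restricted to the subgroup $P(T)$, which is visibly free. The orbit space is $G/P$, and by the submersion property established in Ch.\ 8 of \cite{ccf} the canonical map $\pi$ is a surjective submersion admitting local sections; as in the proof of Prop.\ \ref{Proposition::equivbundle}, a local section $s\colon W\to p^{-1}(W)$ together with the action yields a local isomorphism $\gamma\colon W\times P\to \pi^{-1}(W)$, and one checks directly that $\gamma$ intertwines the $P$-actions so that condition (3) of Prop.\ \ref{eq-cond} holds in the functor-of-points language: $(\sigma_i)_T(ug)=(\pi_T(u),hg)$. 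By Prop.\ \ref{eq-cond} the sheaf-theoretic and SHCP conditions follow automatically, so $\pi$ is a principal super bundle.

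For the second part I would work in the functor-of-points picture. Recall $\Delta(t)=\sum t^{(1)}\otimes t^{(2)}$ with $\pi(t^{(2)})\in \cO(P)$; the relation $\Delta_\pi(t)=t\otimes\pi(t)$ from Prop.\ \ref{t} says precisely that the ``second-leg'' family behaves multiplicatively along $P$. Let $\red{U}\subset\red{G/P}$ be the open set where the image of the family $\{t^{(2)}\}$ is invertible — concretely, the non-vanishing locus of the section of $\cL^{\chi}$ determined by $t$, pushed down to $G/P$ — and let $U$ be the corresponding open subsupermanifold. The plan is to define
$$
\sigma\colon \pi^{-1}(U)\longrightarrow U\times P
$$
on $T$-points by $\sigma_T(g)=(\pi_T(g),\, \tau_T(g))$, where $\tau_T(g)\in P(T)$ is the ``$P$-component'' of $g$ read off from $t$: informally $\tau(g)$ is characterized by $t(g)=t(g\tau(g)^{-1})\cdot\chi(\tau(g))$ with $g\tau(g)^{-1}$ lying in a fixed section over $U$. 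Invertibility of $t^{(2)}$ is exactly what makes $\tau_T$ a well-defined morphism. One then verifies $\mathrm{pr}_1\circ\sigma=\pi$ (immediate) and the equivariance $\sigma_T(gb)=(\pi_T(g),\tau_T(g)b)$ for $b\in P(T)$, which reduces to the cocycle identity $\tau(gb)=\tau(g)b$; this in turn is a direct consequence of $\Delta_\pi(t)=t\otimes\pi(t)$ and the coassociativity of $\Delta$. Finally one checks $\sigma$ is an isomorphism by writing down the inverse $U\times P\to\pi^{-1}(U)$, $(\pi(g),b)\mapsto s(\pi(g))\cdot b$ using the local section $s$ normalized by $t$. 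By Prop.\ \ref{eq-cond}, condition \eqref{fopts} then gives all three equivalent principal-bundle conditions.

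The main obstacle I anticipate is making the map $\tau_T$ genuinely rigorous: one must produce, from the invertibility of $t^{(2)}$, an honest morphism of supermanifolds $\pi^{-1}(U)\to P$ (not merely a set-theoretic rule on $T$-points) and verify it lands in $P$ rather than in $G$. The natural way is to exhibit it via the Global Chart theorem (\cite{ccf} Thm.\ 4.2.5) and the explicit big-cell-type decomposition, so that $\tau$ is literally expressed through the coordinates $t^{(2)}$ and their inverses; then $\tau$ is a morphism by construction, and its image lies in $P$ because $\pi(t^{(2)})\in\cO(P)$ by Prop.\ \ref{t}. Once $\tau$ is in hand, the remaining verifications — the intertwining property and the bijectivity of $\sigma$ — are formal manipulations with $\Delta$, $\epsilon$ and the antipode, entirely parallel to the classical case, and can be cited from \cite{ccf} Ch.\ 9.
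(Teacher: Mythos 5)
Your treatment of the first assertion matches the paper's: freeness of the right $P$-action on $T$-points and the local sections coming from the quotient construction in \cite{ccf} Ch.\ 8 give the trivializing cover, and Prop.\ \ref{eq-cond} converts between the three formulations. That part is fine.

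For the second assertion there is a genuine gap. The statement claims that the affine opens where the $t^{(2)}$ are invertible \emph{give} a local trivialization, and the substantive point --- the one the paper actually proves --- is that these opens \emph{cover} $G$ (equivalently $G/P$). You construct (or sketch) a trivialization map $\sigma$ over a single such open $U$, but you never address why the collection of these opens, as $t^{(2)}$ ranges over the family $\Delta(t)=\sum t^{(1)}\otimes t^{(2)}$, exhausts the base; without that you have not produced a trivializing atlas. The covering property is not automatic: it is exactly here that the Borel--Weil--Bott Theorem \ref{bwb-thm} enters, since $\cO(G/P)_1$ is an irreducible $G$-representation and the computation $(g\cdot t)(x)=\sum t^{(1)}(g^{-1})\,t^{(2)}(x)$ identifies the $t^{(2)}$'s (up to the pairing with $g^{-1}$) with the $G$-translates of $t$, which span the whole space of global sections of the very ample line bundle; a common zero of all the $t^{(2)}$ would then be a common zero of all global sections, contradicting the existence of the projective embedding. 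This irreducibility-plus-no-common-zero argument is absent from your proposal and cannot be replaced by the local construction of $\sigma$.

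A secondary, smaller issue: your definition of $\tau_T(g)$ via ``$t(g)=t(g\tau(g)^{-1})\cdot\chi(\tau(g))$ with $g\tau(g)^{-1}$ lying in a fixed section over $U$'' is circular as written, since the section over $U$ is precisely what the trivialization is supposed to produce; a single scalar-valued $t$ does not determine the $P$-component of $g$. The honest route, which you gesture at, is the big-cell decomposition $N^-\times P\cong\Gamma$ (and its translates), with the non-vanishing locus of a translate of $t$ sitting inside the corresponding translated cell. You flag this as an anticipated obstacle, which is fair, but the missing covering argument above is the real omission relative to what the theorem asserts.
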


\begin{proof}
For the first assertion, notice that $P$ acts freely on $G$ and
trivially on $G/P$, a local trivialization with the properties
of Prop. \ref{eq-cond} is obtained by the very construction
of quotients (see \cite{ccf} Ch. 8). As for the second assertion,
it is the same as in the ordinary setting, but we briefly recap it.
By the Borel-Weil-Bott Theorem \ref{bwb-thm}, we have that
$\cO(G/P)_1$ is an irreducible representation of $G$. In the
functor of points notation we can write:
$$
(g \cdot t)(x)=t(g^{-1}x)=\Delta(t)(g^{-1} \otimes x)=
\sum t^{(1)}(g^{-1}) \otimes t^{(2)}(x).
$$
It is then clear that the affine open sets defined as
complement of $\{\red{t^{(2)}}=0\}$ will cover $G$, otherwise
we would have a common zero for global sections of a line
bundle giving a projective embedding. 
\end{proof}

\begin{remark}
Everything we say for the complex analytic supergroup $G$ in this
section can be generalized to the complex algebraic category. 
The Borel-Weil-Bott statement is true and the proof is the same,
provided we consider the objects in the correct category.
As for the principal bundles: the existence of a local trivialization
for the bundle $G \lra G/P$ is not granted in general for the
algebraic category even in the ordinary setting, it is however true
for the simple supergroups that we are considering. We shall not
pursue this question further in the present work leaving a full
discussion of the quotients of simple supergroups in a forthcoming paper.
\end{remark}


\begin{thebibliography}{99}

\bibitem{afl} P. Aschieri, R. Fioresi, E. Latini,
\textit{Quantum Principal bundles over projective bases}, 
preprint, 2018.

\bibitem{bcc}
L. Balduzzi, C. Carmeli, G. Cassinelli,
\textit{Super G-spaces},
Symmetry in mathematics and physics, Contemp. Math. 490, Amer. Math. Soc.,
159--176, 2009.

\bibitem{bcf}
L. Balduzzi, C. Carmeli, R. Fioresi
\textit{Quotients in supergeometry},
Symmetry in mathematics and physics, Contemp. Math. 490, Amer. Math. Soc.,
177--187, 2009.


\bibitem{ccf} C.~Carmeli, L.~Caston, R.~Fioresi, 
{\it  Mathematical Foundation of Supersymmetry}, 
with an appendix with I. Dimitrov, EMS Ser. Lect. Math., European
Math. Soc., Zurich, 2011.

\bibitem{cf} C.~Carmeli, R.~Fioresi, 
{\it Super Distributions, Analytic and Algebraic Super Harish-Chandra pairs},
Pac. J. Math., vol. 263, p. 29-51, 2013.

\bibitem{cfv} C.~Carmeli, R.~Fioresi, V. S. Varadarajan,
\textit{Highest weight Harish-Chandra supermodules
and their geometric realizations}, preprint, 2018.

\bibitem{cfg} N. Ciccoli, R. Fioresi, F. Gavarini,
\textit{Quantization of Projective Homogeneous Spaces
and Duality Principle}, Journal of Noncommutative Geometry, 449--496, {\bf 2}
(2008).

\bibitem{dm} P.~Deligne, J.~Morgan,  {\it Notes on supersymmetry 
(following J.~Bernstein)},  in: ``Quantum fields and strings. 
A course for mathematicians'', Vol.~1, AMS, 1999.

\bibitem{fi1} Fioresi, R.  
\textit{Quantum homogeneous superspaces and quantum duality principle}. 
From Poisson brackets to universal quantum symmetries, 59–72, 
Banach Center Publ., 106, Polish Acad. Sci. Inst. Math., Warsaw, 2015. 

\bibitem{fg1} R. Fioresi, F. Gavarini {\it Quantum duality
principle for quantum grassmannians}, 
in ``Quantum Groups and Noncommutative Spaces'', Ed. M. Marcolli,
D. Parashar, 80-95, Springer, 2011.

\bibitem{fg} R. Fioresi, F. Gavarini, {\it Chevalley Supergroups},
Memoirs of the AMS, vol. 215, 1-64, 2012.

\bibitem{fg2} R. Fioresi, F. Gavarini, {\it On Algebraic Supergroups
with Lie superalgebras of classical type},
 J. Lie Theory, vol. 23, p. 143-158, 2013.

\bibitem{fg3} R. Fioresi, F. Gavarini, \textit{On the construction of Chevalley 
supergroups}. Supersymmetry in mathematics and physics, 101-123, 
Lecture Notes in Math., 2027, Springer, Heidelberg, 2011.

\bibitem{ha} P. M. Hajac,
\textit{ Strong connections on quantum principal bundles},
Comm. in Math. Phys., 1996, vol. 182, 579-617. 


\bibitem{kac} V.~G.~Kac,  {\it Lie superalgebras},  
Adv.~Math.~{\bf 26}  (1977), 8-26.

\bibitem{gw} S. Garnier, T. Wurzbacher, 
{\it Integration of vector fields on smooth and holomorphic supermanifolds},
Documenta Mathematica, 18, (2013), 519-545. 


\bibitem{Kostant}
B.~Kostant.  Graded manifolds, graded Lie theory, and prequantization. 
\textit{Differential geometrical methods in mathematical physics} 
(Proc. Sympos., Univ. Bonn, Bonn, (1975), pp. 177--306. 
\textit{Lecture Notes  in Math.}, Vol. 570, Springer, Berlin, 1977.


\bibitem{koszul}  J.-L., Koszul, \textit{Graded manifolds and graded 
{L}ie algebras},
 {Proceedings of the international meeting on geometry and physics 
({F}lorence, 1982)}, 71--84, Pitagora,
Bologna, 1982.

\bibitem{Leites}
D.~A.~Leites,  \textit{Introduction to the theory of supermanifolds},  
Russian Math. Surveys \textbf{35}:~1 (1980), 1-64.

\bibitem{ma}
Y.~I.~Manin.   \textit{Gauge field theory and complex geometry}; 
translated by N. Koblitz and J.R. King.  Springer-Verlag, 
Berlin-New York, 1988.


\bibitem{mo} S.~Montgomery,  {\it Hopf Algebras and Their Actions on Rings}, 
CBMS no.~82, AMS ed.,  Providence, RI, 1993.

\bibitem{penkov}
I. Penkov, \textit{Borel-Weil-Bott theory for classical Lie supergroups}. 
(Russian) Translated in J. Soviet Math. 51 (1990), no. 1, 2108-2140. 
Itogi Nauki i Tekhniki, Current problems in mathematics. Newest results, 
Vol. 32, 71–124, Akad. Nauk SSSR, Vsesoyuz. Inst. Nauchn. i Tekhn. Inform., 
Moscow, 1988.

\bibitem{vsv2} V.~S.~Varadarajan,  {\it Supersymmetry for
    mathematicians: an introduction},  Courant Lecture Notes  {\bf 1},
  AMS, 2004.

\bibitem{vi2} 
E.G. Vishnyakova. \textit{
On Complex Lie Supergroups and Homogeneous Split Supermanifolds}, 
Transformation Groups, Vol. 16, No. 1, 265-285, 2010.
 


\end{thebibliography}
\end{document}